\documentclass{article}
\usepackage{amssymb, amsrefs, amsmath, amsthm, amsfonts, graphicx, amscd}

\usepackage[margin=1in]{geometry}
\numberwithin{equation}{section}

\newtheorem{theorem}{Theorem}[section]
\newtheorem{lemma}[theorem]{Lemma}
\newtheorem{proposition}[theorem]{Proposition}

\newtheorem{predefinition}[theorem]{Definition}
\newenvironment{definition}{\begin{predefinition}\rm}{\end{predefinition}}
\newtheorem{preremark}[theorem]{Remark}
\newenvironment{remark}{\begin{preremark}\rm}{\end{preremark}}
\newtheorem{prenotation}[theorem]{Notation}
\newenvironment{notation}{\begin{prenotation}\rm}{\end{prenotation}}
\newtheorem{preexample}[theorem]{Example}

\newtheorem{preclaim}[theorem]{Claim}

\newtheorem{prequestion}[theorem]{Question}

\begin{document}


\author{Robert Guralnick, Beth Malmskog, and Rachel Pries}
\title{The Automorphism Groups of a Family of Maximal Curves}

\maketitle

\begin{abstract}
The Hasse Weil bound restricts the number of points of a curve which are defined over a 
finite field; if the number of points meets this bound, the curve is called maximal.  
Giulietti and Korchmaros introduced a curve $\mathcal{C}_3$ which is maximal over ${\mathbb F}_{q^6}$
and determined its automorphism group.
Garcia, Guneri, and Stichtenoth generalized this construction to a family of curves $\mathcal{C}_n$, 
indexed by an odd integer $n \geq 3$, 
such that ${\mathcal C}_n$ is maximal over ${\mathbb F}_{q^{2n}}$.
In this paper, we determine the automorphism group $\textrm{Aut}(\mathcal{C}_n)$ 
when $n > 3$; in contrast with the case $n=3$, it fixes the point at infinity on $\mathcal{C}_n$. 
The proof requires a new structural result about automorphism groups of curves in characteristic $p$ 
such that each Sylow $p$-subgroup has exactly one fixed point.\\
keywords: Weil bound, maximal curve, automorphism, ramification.\\
MSC:11G20, 14H37.
\end{abstract}

\section{Introduction}\label{intro}

In this paper, we prove a group-theoretic result which
produces a new structural result about automorphism groups of curves $\mathcal{X}$ in positive characteristic $p$. 
These theorems are closely related to the main results of \cite{asch, GKnontame, KOS}. 
The main idea is to analyze the automorphism group $A$ of a curve $\mathcal{X}$ having genus at least $2$ under the following conditions:
there is a Sylow $p$-subgroup $Q$ of $A$, which contains an elementary abelian subgroup of order $p^2$, such that the 
action of $Q$ fixes exactly $1$ point of $\mathcal{X}$ and whose other orbits on $\mathcal{X}$ all have size $|Q|$.
The result is that the center $M$ of $N_A(Q)$ is a (possibly trivial) prime-to-$p$ subgroup which is normal in $A$, 
and either $A$ fixes a point of $\mathcal{X}$ or $A/M$ is almost simple and there is a short list of possibilities
for its socle.  The proof uses TI subgroups and the classification of finite simple groups.

Our motivation for this result was to determine the automorphism groups of the infinitely many 
new maximal curves discovered by \cite{GK09} and \cite{GGS08}.
Chinburg has already found another application of this result
in classifying automorphism groups of Katz-Gabber covers. 

To describe our main application, let $q=p^h$ be a power of a prime and 
let $\mathcal{X}$ be a smooth connected projective curve of genus $g$ defined over $\mathbb{F}_{q^2}$. 
The Hasse-Weil bound states that the number of points of $\mathcal{X}$ defined over $\mathbb{F}_{q^2}$ is bounded above by $q^2+1+2gq$.  
A curve which attains this bound is called an $\mathbb{F}_{q^2}$-maximal curve.   
The curve $\mathcal{H}_q$ with affine equation $x^q+x-y^{q+1}=0$ is known as the Hermitian curve and has been well studied.  
It is maximal over $\mathbb{F}_{q^2}$, and thus maximal over $\mathbb{F}_{q^{2n}}$ for $n\geq 3$ odd \cite[VI, 4.4]{Stich93}.  
It has genus $q(q-1)/2$, the highest genus which is attainable for an $\mathbb{F}_{q^2}$-maximal curve.  
It is the unique ${\mathbb F}_{q^2}$-maximal curve of this genus.
  
Let $n \geq 3$ be odd and let $m=(q^n+1)/(q+1)$. 
The curve $\mathcal{X}_n$ with affine equation $y^{q^2}-y-z^{m}=0$ has genus $(q-1)(q^n-q)/2$.
In {\cite[Thm.\ 1]{ABQ}}, the authors proved that ${\mathcal X}_n$ is $\mathbb{F}_{q^{2n}}$-maximal.  

For $n \geq 3$ odd, define $\mathcal{C}_n$ to be the normalization of the fiber product of the covers of curves 
$\mathcal{H}_q\rightarrow\mathbb{P}^1_y$ and $\mathcal{X}_n\rightarrow \mathbb{P}^1_y$.  
The curve $\mathcal{C}_3$ is isomorphic to the curve introduced in \cite{GK09}.  
Giulietti and Korchmaros proved that $\mathcal{C}_3$ is maximal over ${\mathbb F}_{q^6}$
using the natural embedding theorem \cite{KTem} 
which states that every $\mathbb{F}_{q^2}$-maximal curve is isomorphic to a smooth absolutely irreducible curve of degree $q+1$ 
embedded in a non-degenerate Hermitian variety.  
They also proved that $\mathcal{C}_3$ is not covered by any Hermitian curve.  
Garcia, Guneri, and Stichtenoth proved that the genus of $\mathcal{C}_n$ is $(q-1)(q^{n+1}+q^n-q^2) / 2$ 
and that $\mathcal{C}_n$ is $\mathbb{F}_{q^{2n}}$-maximal for $n\geq 3$ \cite{GGS08}. 
Recently, Duursma and Mak proved that $\mathcal{C}_n$ is not Galois covered by the Hermitian curve 
$\mathcal{H}_{q^{2n}}$ if $q$ is odd \cite{Duursma}. 

In this paper, we study the geometric automorphism groups of the curves $\mathcal{C}_n$
which we denote by $\textrm{Aut}({\mathcal C}_n)$.
In \cite[Thm.\ 6]{GK09}, the authors determined the automorphism group $\textrm{Aut}(\mathcal{C}_3)$ 
when $q \equiv 1 \bmod 3$ and found a normal subgroup of index 3 in $\textrm{Aut}(\mathcal{C}_3)$ if $q\equiv 2 \bmod 3$.
This automorphism group is very large compared to the genus $g_{\mathcal{C}_3}$ of $\mathcal{C}_3$, 
i.e., $\textrm{Aut}(\mathcal{C}_3)\geq 24g_{\mathcal{C}_3}(g_{\mathcal{C}_3}-1)$.
There is no point of $\mathcal{C}_3$ fixed by the full automorphism group. 

In this paper, we determine the automorphism group of $\mathcal{C}_n$ for $n > 3$ odd.
A consequence of our results is that all automorphisms of ${\mathcal C}_n$ are defined over ${\mathbb F}_{q^{2n}}$.
The structure of the automorphism group when $n > 3$ turns out to be quite different from the case when $n=3$. 
 
\begin{theorem} \label{Tmaintheorem}
Suppose $n > 3$ is odd.
The automorphism group $\textrm{Aut}(\mathcal{C}_n)$ fixes the point at infinity on $\mathcal{C}_n$ 
and is a semi-direct product $\Gamma$ of the form $Q\rtimes \Sigma$ where 
$\Sigma$ is a cyclic group of order $(q^n+1)(q-1)$ and $Q$ is a non-abelian group of order $q^3$.
\end{theorem}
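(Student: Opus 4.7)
The plan is to first exhibit $\Gamma = Q \rtimes \Sigma$ as an explicit subgroup of $A := \textrm{Aut}(\mathcal{C}_n)$ stabilizing the point $P_\infty$ at infinity, and then to show $A = \Gamma$ via two reductions: first that $A$ fixes some point of $\mathcal{C}_n$, and second that this point is $P_\infty$. Using the affine model $x^q+x=y^{q+1}$, $y^{q^2}-y=z^m$ with $m=(q^n+1)/(q+1)$, the group $Q$ consists of the standard Heisenberg-type automorphisms of $\mathcal{H}_q$ fixing $P_\infty$, namely $(x,y)\mapsto(x+b^{q+1}+by^q+a,\,y+b)$ with $b\in\mathbb{F}_{q^2}$ and $a^q+a=0$; these lift to $\mathcal{C}_n$, acting trivially on $z$, because the substitution $y\mapsto y+b$ with $b\in\mathbb{F}_{q^2}$ preserves $y^{q^2}-y$. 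The group $\Sigma$ is built from diagonal automorphisms $(x,y,z)\mapsto(\beta^{q+1}x,\,\beta y,\,\gamma z)$ constrained by $\gamma^m=\beta$; compatibility then forces $\gamma^{m(q^2-1)}=1$, and since $m(q^2-1)=(q^n+1)(q-1)$, this yields a cyclic group of the asserted order. The semidirect product structure follows by direct computation of conjugation.

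The main task is to prove $A\le\Gamma$, and for this I would apply the group-theoretic theorem stated in the introduction. Let $\widetilde Q$ be a Sylow $p$-subgroup of $A$ containing $Q$. Its hypotheses are verified as follows: $\widetilde Q$ contains the large elementary abelian subgroup already present in $Q$; its fixed point set is contained in the fixed point set of $Q$, which is $\{P_\infty\}$; and its orbits outside $P_\infty$ are regular, by a local analysis of the higher ramification filtration at each possible fixed point of a non-trivial element, using the tower $\mathcal{C}_n\to\mathcal{H}_q\to\mathbb{P}^1$. The theorem then gives two alternatives: either $A$ fixes a point of $\mathcal{C}_n$, or $M:=Z(N_A(\widetilde Q))$ is a normal prime-to-$p$ subgroup of $A$ with $A/M$ almost simple, the socle of $A/M$ drawn from a short explicit list.

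The hardest part is to rule out this almost simple alternative, which is where the assumption $n>3$ enters essentially. Using the genus $g_{\mathcal{C}_n}=(q-1)(q^{n+1}+q^n-q^2)/2$ and the lower bound $|A|\ge|\Gamma|=q^3(q^n+1)(q-1)$, I would eliminate each candidate socle (essentially the rank-one Lie type groups in characteristic $p$: $\mathrm{PSL}_2$, $\mathrm{PSU}_3$, $\mathrm{Sz}$, and $\mathrm{Ree}$ over suitable fields) by combining Hurwitz-type genus bounds, $p$-rank inequalities, and the divisibility constraint that $|\Gamma|$ divides $|A|$. The threshold $n=3$ is precisely where $\mathrm{PSU}_3(q^3)$ survives (via the natural embedding of $\mathcal{C}_3$ in the Hermitian variety), so the numerical elimination must use an ingredient specific to $n\ge 5$. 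Once the almost simple case is excluded, $A$ fixes a point, which must be $P_\infty$ since $Q\le A$ has $P_\infty$ as its unique fixed point. A final local analysis at $P_\infty$, bounding $|\widetilde Q|\le q^3$ via the jumps in the higher ramification filtration and the cyclic prime-to-$p$ complement via its faithful action on the cotangent line at $P_\infty$, then yields $A=\Gamma$.
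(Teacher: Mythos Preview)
Your overall architecture matches the paper: exhibit $\Gamma$ explicitly, verify the hypotheses of the structural theorem for the Sylow $p$-subgroup, and then use that theorem to reduce to two alternatives. However, the way you propose to handle what you correctly identify as ``the hardest part'' --- ruling out the almost simple alternative --- is genuinely different from the paper, and this is where your proposal is weakest.

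The paper does \emph{not} attempt a case-by-case numerical elimination of the candidate socles. Instead it observes that once $M=Z(I)$ is known to be normal in $A$, the quotient $A/M$ acts faithfully on $\mathcal{C}_n/M=\mathcal{H}_q$, and hence $A/M$ embeds in $\mathrm{Aut}(\mathcal{H}_q)\cong\mathrm{PGU}_3(q)$. Since $I/M$ is already a maximal subgroup of $\mathrm{PGU}_3(q)$, either $A=I$ or $A/M\cong\mathrm{PGU}_3(q)$; and the latter would force the specific involution $\omega:(x,y)\mapsto(1/x,\,y/x)$ of $\mathcal{H}_q$ to lift to $\mathcal{C}_n$. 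A concrete valuation computation at $P_0$ and $P_\infty$ (Proposition~3.11 in the paper) shows this lift forces $n=3$. So the input specific to $n>3$ is not a numerical genus or $p$-rank inequality but an explicit obstruction to lifting one automorphism. Your proposed route via ``Hurwitz-type genus bounds, $p$-rank inequalities, and divisibility'' is vague at exactly the decisive point, and it is not clear it can be made to work: the socle $\mathrm{PSU}_3(q)$ (not $\mathrm{PSU}_3(q^3)$, incidentally --- the $n=3$ group is built from $\mathrm{SU}(3,q)$) is compatible with the order and genus constraints you mention, so something beyond numerics is needed.

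Two further points. First, the paper establishes early (Proposition~3.8) that $Q$ itself \emph{is} a Sylow $p$-subgroup of $A$, rather than passing to a larger $\widetilde Q$; this follows easily once you know $Q$ acts with a unique fixed point and regular orbits elsewhere, since then $N_A(Q)$ fixes $P_\infty$ and equals $I$, so $[N_P(Q):Q]$ is prime to $p$ for any $p$-group $P\supseteq Q$. Second, your claim that $\widetilde Q$ ``contains the large elementary abelian subgroup already present in $Q$'' fails when $q=2$: then $Q$ is quaternion of order $8$, so hypothesis~(ii) of the structural theorem is violated, and the paper handles this case by an entirely separate argument (Glauberman's $Z^*$ theorem plus a short solvable-group analysis). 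Your proposal does not account for this.
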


We describe the structure of $\Gamma$ precisely in Section \ref{Q}.
Since $|\textrm{Aut}(\mathcal{C}_n)|=q^3(q^n+1)(q-1)$ and the genus of $\mathcal{C}_n$ is $(q-1)(q^{n+1}+q^n-q^2)/2$,
Theorem \ref{Tmaintheorem} implies that  $|\textrm{Aut}(\mathcal{C}_n)|$ grows more than linearly in $g(\mathcal{C}_n)$, 
surpassing the Hurwitz bound for large $q$.  
Since $|Q|$ is relatively small, Theorem \ref{Tmaintheorem} also shows that the order of the 
automorphism group is mostly prime-to-$p$.
This contrasts with the situation in many recent papers about curves whose automorphism group is large 
\cite{lehrmat, rocher1, rocher2}.    

Here is the outline of the proof.
Using \cite[Thm.\ 12.11]{HKT08}, we first show that
$\textrm{Aut}(\mathcal{X}_n)$ fixes the point at infinity on $\mathcal{X}_n$ 
and is a semi-direct product of the form $ (\mathbb{Z}/p)^{2h}\rtimes \Sigma$
where $\Sigma$ is a cyclic group of order $(q^n+1)(q-1)$, see Proposition \ref{autxn}.
The next step is to study the inertia group $I_{\mathcal{C}_n}$ at the point at infinity $P_{\infty}$ 
for the cover $\mathcal{C}_n \to \mathcal{C}_n/\textrm{Aut}(C_n)$.
Using ramification theory, we prove in Proposition \ref{TautC} that $I_{\mathcal{C}_n}$ has the structure of the 
semi-direct product $\Gamma = Q \rtimes \Sigma$ described in Theorem \ref{Tmaintheorem}.
By studying the orbits of $\Gamma$ on ${\mathcal C}_n$, we prove that the Sylow $p$-subgroups of $\textrm{Aut}(C_n)$ 
are isomorphic to $Q$ in Proposition \ref{Csyl}.
The first difference from the case $n=3$ occurs in Proposition \ref{Pliftw}, where we prove that 
not all automorphisms of the Hermitian curve $H_q$ lift to automorphisms of ${\mathcal C}_n$ when $n >3$.

To finish the proof of Theorem \ref{Tmaintheorem}, we apply the new structural result, Theorem \ref{Tbob},
about automorphism groups of curves in positive characteristic as described above.
This structural result relies on the group-theoretic result found in Theorem \ref{TI}.
The case $q=2$ is handled separately in Proposition \ref{Pq=2}.
See Sections \ref{Sbob} and \ref{Sq=2} for precise details.
All these results are combined in Section \ref{Smaintheorem} to prove Theorem \ref{Tmaintheorem}.

This paper can be found on the archive at arXiv:1105.3952; 
in November 2011, G\"uneri, \"Ozdemir, and Stichtenoth sent us an alternative proof of Theorem \ref{Tmaintheorem} which uses Riemann-Roch spaces \cite{GOS}.

We would like to thank Tim Penttila for helpful conversations. 
The first author was partially supported by NSF grant DMS-1001962.
The third author was partially supported by NSF grant DMS-1101712.

\section{Automorphisms of $\mathcal{C}_n$ and $\mathcal{X}_n$}

\subsection{Geometry of $\mathcal{C}_n$ and $\mathcal{X}_n$}

Recall that $q=p^h$, that $n \geq 3$ is odd, and that $m=(q^n+1)/(q+1)$. 
Recall that $\mathcal{H}_q$ and $\mathcal{X}_n$ are the smooth projective curves with affine equations 
\begin{eqnarray}
\mathcal{H}_q &:& x^q+x-y^{q+1}=0,\\
\mathcal{X}_n &:& y^{q^2}-y-z^{m}=0.
\end{eqnarray}
Define $\mathcal{C}_n$ to be the normalization of the fiber product of the covers of curves 
$\mathcal{H}_q\rightarrow\mathbb{P}^1_y$ and $\mathcal{X}_n\rightarrow \mathbb{P}^1_y$
as illustrated in the following diagram:

\[\begin{array}{cccc}
\mathcal{C}_n=& \mathcal{X}_n \tilde{\times}_{\mathbb{P}^1}\mathcal{H}_q & \rightarrow& \mathcal{X}_n\\
& \downarrow& & \downarrow\\
& \mathcal{H}_q & \longrightarrow& \mathbb{P}^1_y.
\end{array}
\]

\begin{remark} 
The curve $\mathcal{C}_3$ was initially presented as the intersection in $\mathbb{P}^3$ of two hypersurfaces \cite{GK09}.
The projective curve in $\mathbb{P}^3$ given by the homogenization of the equations $x^q+x-y^{q+1}$ and $y^{q^2}-y-z^m$ 
is smooth only when $n=3$.  For $n\geq5$, the curve has a cusp type singularity.  
\end{remark}

Let $\infty_y$ be the point at infinity on ${\mathbb P}^1_y$.

\begin{lemma}\label{cninfinity}
The curve $\mathcal{X}_n$ has a unique point $\infty_{{\mathcal X}_n}$ in the fiber above $\infty_y$.
The curve $\mathcal{C}_n$ has a unique point $P_\infty$ in the fiber above $\infty_y$.
\end{lemma}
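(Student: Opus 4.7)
\textbf{Proof proposal for Lemma \ref{cninfinity}.} The plan is to establish total ramification at $\infty_y$ separately in each of the two intermediate covers $\mathcal{X}_n \to \mathbb{P}^1_y$ and $\mathcal{H}_q \to \mathbb{P}^1_y$, and then combine them in the fiber product using a coprimality argument. The key arithmetic observation is that $m = (q^n+1)/(q+1)$ divides $q^n+1$, so $m$ is coprime to $p$, and in particular $\gcd(m,q) = \gcd(m,q^2) = 1$.

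For $\mathcal{X}_n \to \mathbb{P}^1_y$: let $t = 1/y$, so that $t$ is a uniformizer at $\infty_y$. The defining relation $z^m = y^{q^2} - y$ rewrites as $z^m t^{q^2} = 1 - t^{q^2-1}$, whose right-hand side is a unit at $t=0$. If $\mathfrak{p}$ is any place of $\mathcal{X}_n$ lying above $\infty_y$ with ramification index $e$, then applying $v_{\mathfrak{p}}$ to this relation yields $m\, v_{\mathfrak{p}}(z) + q^2 e = 0$. Since $\gcd(m,q^2)=1$, this forces $m \mid e$. But $e \leq [\mathbb{F}_{q^2}(\mathcal{X}_n):\mathbb{F}_{q^2}(y)] = m$, so $e = m$ and there is a unique place $\infty_{\mathcal{X}_n}$ above $\infty_y$, totally ramified. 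The analogous (and classical) fact for the Hermitian curve gives a unique place of $\mathcal{H}_q$ above $\infty_y$, also totally ramified, with ramification index $q$.

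For $\mathcal{C}_n$: since $\gcd(q,m) = 1$ the extensions $\mathbb{F}_{q^2}(\mathcal{H}_q)$ and $\mathbb{F}_{q^2}(\mathcal{X}_n)$ of $\mathbb{F}_{q^2}(y)$ have coprime degrees and are therefore linearly disjoint, so their compositum, which is the function field of $\mathcal{C}_n$, has degree $qm$ over $\mathbb{F}_{q^2}(y)$. Any place $\mathfrak{P}$ of $\mathcal{C}_n$ above $\infty_y$ must restrict to the unique place of $\mathcal{H}_q$ and to the unique place of $\mathcal{X}_n$ sitting above $\infty_y$; by multiplicativity of ramification indices in a tower, $e(\mathfrak{P}|\infty_y)$ is divisible by both $q$ and $m$, hence by $qm$. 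The fundamental identity $\sum_i e_i f_i = qm$ then forces a single place $P_\infty$ with $e_{P_\infty} = qm$ and $f_{P_\infty}=1$.

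The only real subtlety is the linear-disjointness step, which is handled cleanly by the coprimality of the degrees; otherwise the argument is a direct valuation computation combined with the standard fact $\sum e_i f_i = [L:K]$. No deeper machinery is required.
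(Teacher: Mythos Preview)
Your proof is correct and follows essentially the same strategy as the paper's: establish total ramification above $\infty_y$ in each of the two intermediate covers, then use the coprimality of $q$ and $m$ to deduce that any point of $\mathcal{C}_n$ above $\infty_y$ has ramification index divisible by $qm$, forcing uniqueness. Your version is simply more explicit---you supply a valuation computation for the Kummer side and justify the degree $qm$ via linear disjointness---whereas the paper asserts both facts without elaboration.
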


\begin{proof}
The ${\mathbb Z}/m$-Galois cover $\mathcal{X}_n \to {\mathbb P}^1_y$ is totally ramified above $\infty_y$.
The cover $\mathcal{H}_q\rightarrow\mathbb{P}^1_y$ is totally ramified above $\infty_y$ 
with ramification index $q$.
Since $q$ and $m$ are relatively prime, $qm$ divides the ramification index of any point over $\infty_y$ in the cover $\mathcal{C}_n\rightarrow\mathbb{P}^1_y$.  Since the degree of $\mathcal{C}_n\rightarrow\mathbb{P}^1_y$ is $qm$, 
this implies that there can only be a single point $P_\infty$ in $\mathcal{C}_n$ above $\infty_y$. 
\end{proof}

Applying the Riemann-Hurwitz formula to the cover $\mathcal{C}_n \to \mathcal{H}_q$ 
shows that the genus of $\mathcal{C}_n$ is $(q-1)(q^{n+1}+q^n-q^2) / 2$.  In \cite{GGS08},  
Garcia, Guneri, and Stichtenoth used the fiber product construction to study the fibers above $\mathbb{F}_{q^{2n}}$-points of $\mathcal{H}_q$ 
and $\mathcal{X}_n$, and thereby proved that $\mathcal{C}_n$ is $\mathbb{F}_{q^{2n}}$-maximal. 

To summarize, the genera and numbers of $\mathbb{F}_{q^{2n}}$-points for the curves $\mathcal{X}_n$ and $\mathcal{C}_n$ are:
\begin{center}
$\begin{array}{ll}
g_{\mathcal{X}_n}=(q-1)(q^n-q)/2, & \#\mathcal{X}_n(\mathbb{F}_{q^{2n}})=q^{2n+1}-q^{n+2}+q^{n+1}+1,\\
g_{\mathcal{C}_n}=(q-1)(q^{n+1}+q^n-q^2) / 2, & \#\mathcal{C}_n(\mathbb{F}_{q^{2n}})=q^{2n+2}-q^{n+3}+q^{n+2}+1.
\end{array}$
\end{center}

\subsection{Subgroups of ${\textrm{Aut}}(\mathcal{C}_n)$}\label{Q}

Let $a,b\in\mathbb{F}_{q^2}$ be such that $a^q +a = b^{q+1}$.  Define 

\[Q_{a,b}=\left(\begin{array}{ccc}1 & b^q & a \\0 & 1 & b \\0 & 0 & 1\end{array}\right).\]

Let $Q=\{Q_{a,b} \mid a,b\in\mathbb{F}_{q^2},\  a^q +a = b^{q+1} \}$.  
Note that $Q$ is a subgroup of the special unitary group SU$(3,q^2)$ and that 
\[ Q_{a,b}Q_{c,d}= \left(\begin{array}{ccc}1 & (b+d)^q & a+c+b^qd \\0 & 1 & b+d \\0 & 0 & 1\end{array}\right)=Q_{a+c+b^qd, b+d}.
\]
This implies that $Q$ is not abelian, since $Q_{a,b}Q_{c,d}=Q_{c,d}Q_{a,b}$ if and only if $b^qd=d^qb$, which is not true for arbitrary $b,d\in\mathbb{F}_{q^2}$.

The order of $Q$ is $q^3$ since there is a bijection between $Q$ and the $\mathbb{F}_{q^2}$-rational affine points of $\mathcal{H}_q$.  It is known that $Q$ has exponent $p$ if $p\neq 2$ and exponent $4$ if $p=2$.  The center of $Q$ is 
$Z=\{Q_{a,0} \mid a \in \mathbb{F}_{q^2},\  a^q+a=0\}$.

\begin{lemma} \label{Lcenter}
The subgroup $Z$ is isomorphic to $(\mathbb{Z}/p)^h$.
\end{lemma}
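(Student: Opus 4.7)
The plan is to exhibit $Z$ as the additive group of the $\mathbb{F}_p$-vector space of roots of the $\mathbb{F}_p$-linearized polynomial $T^q + T$ in $\mathbb{F}_{q^2}$, and then count.

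First I would specialize the multiplication law $Q_{a,b}Q_{c,d} = Q_{a+c+b^qd,\,b+d}$ to $b = d = 0$, obtaining $Q_{a,0}Q_{c,0} = Q_{a+c,0}$. This shows that the map $\varphi: Z \to (\mathbb{F}_{q^2},+)$ sending $Q_{a,0} \mapsto a$ is an injective group homomorphism onto the additive subgroup $S := \{a \in \mathbb{F}_{q^2} : a^q + a = 0\}$. So it suffices to prove that $S \cong (\mathbb{Z}/p)^h$.

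Next I would observe that $f(T) := T^q + T \in \mathbb{F}_p[T]$ is an additive (Frobenius-linear) polynomial, so its zero set in any extension of $\mathbb{F}_p$ is an $\mathbb{F}_p$-subspace; in particular $S$ is an $\mathbb{F}_p$-subspace of $\mathbb{F}_{q^2}$, hence elementary abelian of exponent $p$. To count $|S|$, I would note that $f'(T) = 1$, so $f$ is separable and has exactly $q$ distinct roots in an algebraic closure. Any such root $a$ satisfies $a^q = -a$, hence $a^{q^2} = (-a)^q = -a^q = a$, so $a \in \mathbb{F}_{q^2}$. Therefore $|S| = q = p^h$.

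Combining the two: $Z \cong S$ is an elementary abelian $p$-group of order $p^h$, hence $Z \cong (\mathbb{Z}/p)^h$. There is no substantive obstacle here; the only point that requires a small argument is the containment of all $q$ roots in $\mathbb{F}_{q^2}$ (rather than a larger extension), and this falls out immediately from $a^q = -a \Rightarrow a^{q^2} = a$.
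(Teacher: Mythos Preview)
Your proof is correct and follows essentially the same approach as the paper: both identify $Z$ with the additive group of solutions to $a^q+a=0$ in $\mathbb{F}_{q^2}$ via the multiplication law $Q_{a,0}Q_{c,0}=Q_{a+c,0}$, and then observe this is an elementary abelian $p$-group of order $q$. The only minor difference is that you deduce exponent $p$ from the $\mathbb{F}_p$-linearity of $T^q+T$, whereas the paper invokes the (previously stated) exponent of $Q$ for odd $p$ and checks $p=2$ by hand; your version is slightly more self-contained on this point.
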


\begin{proof}
First, $|Z|=q=p^h$ since $a^q+a=\text{Tr}(a)=0$ has $q$ solutions $a\in\mathbb{F}_{q^2}$.
To check that $Z$ is abelian, note that
\[Q_{a,0}Q_{b,0}=Q_{a+b,0}=Q_{b,0}Q_{a,0}.\]
Finally, $Z$ has exponent $p$ because $Q$ has exponent $p$ when $p \not = 2$
and because $Q_{a,0}^2=Q_{0,0}$ when $p=2$.
\end{proof}

\begin{lemma}
The quotient group $Q/Z$ is isomorphic to $(\mathbb{Z}/p)^{2h}$.
\end{lemma}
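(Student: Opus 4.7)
The plan is to exhibit an explicit surjective group homomorphism $\pi\colon Q\to (\mathbb{F}_{q^2},+)$ with kernel exactly $Z$; then the first isomorphism theorem together with the fact that $(\mathbb{F}_{q^2},+)\cong(\mathbb{Z}/p)^{2h}$ gives the result.

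First I would define $\pi(Q_{a,b})=b$. The multiplication rule already computed in the excerpt,
\[
Q_{a,b}Q_{c,d}=Q_{a+c+b^qd,\,b+d},
\]
shows at once that the second coordinate is additive, so $\pi$ is a group homomorphism to the additive group of $\mathbb{F}_{q^2}$.

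Next I would verify surjectivity: for any $b\in\mathbb{F}_{q^2}$, the norm $b^{q+1}$ lies in $\mathbb{F}_q$, and since the trace map $\mathrm{Tr}\colon\mathbb{F}_{q^2}\to\mathbb{F}_q$, $a\mapsto a^q+a$, is surjective (standard fact for separable extensions of finite fields), there exists $a\in\mathbb{F}_{q^2}$ with $a^q+a=b^{q+1}$. Thus $Q_{a,b}\in Q$ and $\pi(Q_{a,b})=b$. For the kernel, $\pi(Q_{a,b})=0$ forces $b=0$, in which case the defining relation reduces to $a^q+a=0$, and conversely any such $Q_{a,0}$ lies in $Z$ by definition. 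Hence $\ker\pi=Z$.

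Applying the first isomorphism theorem, $Q/Z\cong(\mathbb{F}_{q^2},+)\cong(\mathbb{Z}/p)^{2h}$. There is no real obstacle here: once the multiplication formula from the previous page is in hand, the homomorphism, surjectivity, and kernel are all immediate, and the only external input is the surjectivity of the trace on $\mathbb{F}_{q^2}/\mathbb{F}_q$.
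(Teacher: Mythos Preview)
Your proof is correct, but it follows a different route from the paper's. The paper argues intrinsically: it shows that every commutator $Q_{a,b}^{-1}Q_{c,d}^{-1}Q_{a,b}Q_{c,d}$ lies in $Z$ (so $Q/Z$ is abelian), observes that $|Q/Z|=p^{2h}$ from the previous lemma, and then checks that $Q/Z$ has exponent $p$ (using that $Q$ itself has exponent $p$ when $p\neq 2$, and computing $Q_{a,b}^2=Q_{b^{q+1},0}\in Z$ when $p=2$). An abelian $p$-group of order $p^{2h}$ and exponent $p$ is then necessarily $(\mathbb{Z}/p)^{2h}$. Your approach instead identifies the quotient explicitly via the homomorphism $Q_{a,b}\mapsto b$ onto $(\mathbb{F}_{q^2},+)$, which is slicker and avoids any case distinction on $p$; the paper's argument, on the other hand, never needs to invoke surjectivity of the trace and makes the commutator structure of $Q$ visible, which is also of independent interest. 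Either approach is perfectly adequate here.
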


\begin{proof}
First, $Q/Z$ is abelian since for $Q_{a,b}, Q_{c,d}\in Q$, the commutator $Q_{a,b}^{-1}Q_{b,c}^{-1}Q_{a,b}Q_{c,d}$ is in $Z$:  
\begin{eqnarray*}
Q_{a,b}^{-1}Q_{b,c}^{-1}Q_{a,b}Q_{c,d}&=&Q_{a^q+c^q+b^qd ,-b-d}Q_{a+c+b^qd, b+d}\\
&=&Q_{a^q+a+c^q+c+2b^qd-(b+d)^{q+1},0}\in Z.
\end{eqnarray*}
The order of $Q/Z$ is $p^{2h}$ by Lemma \ref{Lcenter}.
Finally, $Q/Z$ has exponent $p$ because $Q$ has exponent $p$ when $p \not = 2$ and because
$Q_{a,b}^2=Q_{b^{q+1}, 0} \in Z$ when $p=2$.
\end{proof}

The group $Q$ acts on the curve $\mathcal{C}_n$.  Let $(x,y,z)$ denote an affine point of $\mathcal{C}_n$.  Define \[Q_{a,b}: \hspace{.25in} x\mapsto x+b^qy+a,  \hspace{.25in} y\mapsto y+b, \hspace{.25in}  z \mapsto z.\]  

\begin{lemma}\label{Qinaut}
The group $Q$ is contained in $\textrm{Aut}(\mathcal{C}_n)$ and the quotient curve $\mathcal{C}_n/Q$ is a projective line.  
Furthermore, $\mathcal{C}_n/Z=\mathcal{X}_n$.
\end{lemma}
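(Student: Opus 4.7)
The plan is to first verify that the prescribed formulas define automorphisms of $\mathcal{C}_n$, and then to identify the two quotients by means of a Galois-theoretic degree count on function fields. For the first step, I will plug the substitution $x \mapsto x+b^qy+a$, $y\mapsto y+b$, $z\mapsto z$ into the two defining equations. For $\mathcal{X}_n$, the equation $y^{q^2}-y-z^m$ becomes $(y+b)^{q^2}-(y+b)-z^m = y^{q^2}+b^{q^2}-y-b-z^m$, which collapses to $y^{q^2}-y-z^m$ because $b\in\mathbb{F}_{q^2}$ forces $b^{q^2}=b$. For $\mathcal{H}_q$, expanding $(x+b^qy+a)^q + (x+b^qy+a) - (y+b)^{q+1}$ and using $b^{q^2}=b$ together with the defining relation $a^q+a=b^{q+1}$ cancels all cross terms and recovers $x^q+x-y^{q+1}$. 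Combined with the multiplication rule already recorded in the excerpt, this gives a homomorphism $Q\to\textrm{Aut}(\mathcal{C}_n)$, and it is injective since $a$ and $b$ can be read off from the action on the coordinates.

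For the quotient $\mathcal{C}_n/Q$, every $Q_{a,b}$ fixes the function $z$, so the fixed field contains $\mathbb{F}_{q^2}(z)$. The degree of the total extension factors as
\[
[\mathbb{F}_{q^2}(\mathcal{C}_n):\mathbb{F}_{q^2}(z)] = [\mathbb{F}_{q^2}(\mathcal{C}_n):\mathbb{F}_{q^2}(y,z)]\cdot[\mathbb{F}_{q^2}(y,z):\mathbb{F}_{q^2}(z)] = q\cdot q^2 = q^3,
\]
corresponding to the Hermitian cover and the $\mathcal{X}_n$-equation respectively. Since $|Q|=q^3$ matches this degree, Artin's theorem forces $\mathbb{F}_{q^2}(\mathcal{C}_n)^Q = \mathbb{F}_{q^2}(z)$, so $\mathcal{C}_n/Q \cong \mathbb{P}^1_z$.

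For $\mathcal{C}_n/Z$, the subgroup $Z=\{Q_{a,0}\}$ acts by $x\mapsto x+a$ while fixing both $y$ and $z$, so the fixed field contains the function field $\mathbb{F}_{q^2}(y,z)$ of $\mathcal{X}_n$. Since the cover $\mathcal{C}_n\to\mathcal{X}_n$ has degree $q$, inherited from the Hermitian cover $\mathcal{H}_q\to\mathbb{P}^1_y$, and $|Z|=q$ by Lemma \ref{Lcenter}, the same degree argument gives $\mathcal{C}_n/Z\cong\mathcal{X}_n$. The only non-routine piece of work is the $\mathcal{H}_q$-invariance calculation, where one must check that the cross terms $b^{q}y^{\,q^2}$ and $by^q$ arising from the Frobenius expansion of $(x+b^qy+a)^q$ cancel exactly against those produced by $(y+b)^{q+1}$; this is where the hypothesis $a^q+a=b^{q+1}$ enters in an essential way. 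Everything else is bookkeeping on degrees.
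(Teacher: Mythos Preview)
Your proof is correct and follows essentially the same approach as the paper: verify that the $Q_{a,b}$-action preserves both defining equations (using $b^{q^2}=b$ and $a^q+a=b^{q+1}$), then identify each quotient via a degree count on function fields against the containment of the obvious fixed subfield. One small slip in your closing commentary: the cross term coming from $(b^qy)^q$ is $b^{q^2}y^q=by^q$, not $b^qy^{q^2}$; the actual terms to cancel against $(y+b)^{q+1}$ are $by^q$, $b^qy$, and $b^{q+1}$.
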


\begin{proof}
Note that $Q$ stabilizes $\mathcal{H}_q$.  Let $(x,y)$ be an affine point of $\mathcal{H}_q$.  Then
\begin{eqnarray*}
Q_{a,b}(x)^q+Q_{a,b}(x)-Q_{a,b}(y)^{q+1}&=&x^q+x+b^{q^2}y^q+b^q y +a^q+a- (y+b)^{q+1}\\
 &=& y^{q+1}+by^q+b^q y+b^{q+1}- (y+b)^{q+1}\\
 &=& 0.
\end{eqnarray*}
In addition, $Q$ stabilizes $\mathcal{X}_n$.   Let $(y,z)$ be an affine point of $\mathcal{X}_n$.  Then
\begin{eqnarray*}
 Q_{a,b}(y)^{q^2}- Q_{a,b}(y)- Q_{a,b}(z)^{m}&=&y^{q^2}-y+b^{q^2}-b-z^{m}\\
  &=&y^{q^2}-y-z^{m}\\
  &=&0.
  \end{eqnarray*}
So $Q$ stabilizes the fiber product $\mathcal{C}_n$.  

The quotient curve $\mathcal{C}_n/Z$ is $\mathcal{X}_n$ because $\mathbb{K}(\mathcal{X}_n)$ is fixed by $Z$ and 
$|\mathbb{K}(\mathcal{C}_n):\mathbb{K}(\mathcal{X}_n)|=q=|Z|$.  
A similar argument shows that $\mathbb{K}(\mathcal{C}_n/Q)\cong\mathbb{K}(z)$, and so $\mathcal{C}_n/Q$ is a projective line, 
denoted $\mathbb{P}^1_z$.
\end{proof}

Let $\zeta\in \mu_{(q^n+1)(q-1)}$ be a $(q^n+1)(q-1)$-st root of unity.  Define $g_{\zeta}$ by
\[g_{\zeta}: \hspace{.25in}  x \mapsto  \zeta^{q^n+1} x, \hspace{.5in} y \mapsto \zeta^{m}y, \hspace{.5in} z \mapsto \zeta z.\] 

Note that $\zeta^{q^n+1}$ is a $(q-1)$-st root of unity, so an element of $\mathbb{F}_q$.  Therefore $(\zeta^{q^n+1})^q=\zeta^{q^n+1}$.  
The group $\Sigma=\{g_{\zeta} \mid \zeta\in\mu_{(q^n+1)(q-1)}\}$ is a cyclic group of order $(q^n + 1)(q-1)$.  
Define $M \subset \Sigma$ to be the subgroup of order $m=(q^n+1)/(q+1)$ and $N \subset \Sigma$ to be the subgroup of order $q^n+1$.  

\begin{lemma} \label{Ginaut}
The group $\Sigma$ is contained in $\textrm{Aut}(\mathcal{C}_n)$.  
The quotient curves $\mathcal{C}_n/\Sigma$ and $\mathcal{C}_n/N$ are projective lines and $\mathcal{C}_n/M=\mathcal{H}_q$.
\end{lemma}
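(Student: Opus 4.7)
The plan is first to verify that each $g_\zeta$ preserves the defining equations of $\mathcal{H}_q$ and $\mathcal{X}_n$, and then to compute the three fixed fields by producing obvious invariants and matching degrees.

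For the action on $\mathcal{H}_q$, applying $g_\zeta$ to $x^q+x-y^{q+1}$ gives $(\zeta^{q^n+1})^q x^q + \zeta^{q^n+1} x - \zeta^{m(q+1)} y^{q+1}$. Since $\zeta^{q^n+1}$ is a $(q-1)$-st root of unity and hence lies in $\mathbb{F}_q$, one has $(\zeta^{q^n+1})^q = \zeta^{q^n+1}$; and since $m(q+1)=q^n+1$, this expression equals $\zeta^{q^n+1}(x^q+x-y^{q+1})$, which vanishes on $\mathcal{H}_q$. For $\mathcal{X}_n$, applying $g_\zeta$ to $y^{q^2}-y-z^m$ gives $\zeta^{mq^2}y^{q^2}-\zeta^m y -\zeta^m z^m$, and the identity $\zeta^{m(q^2-1)}=\zeta^{(q^n+1)(q-1)}=1$ shows $\zeta^{mq^2}=\zeta^m$, so this is $\zeta^m(y^{q^2}-y-z^m)=0$. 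Hence $\Sigma\subseteq \textrm{Aut}(\mathcal{C}_n)$.

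For the three quotients, the strategy is to exhibit an invariant subfield and then verify that its index in $\mathbb{K}(\mathcal{C}_n)$ equals the order of the group. For $M$ of order $m$: if $\zeta^m=1$, then $\zeta^{q^n+1}=\zeta^{m(q+1)}=1$, so $g_\zeta$ fixes both $x$ and $y$. Thus $\mathbb{K}(\mathcal{H}_q)=\mathbb{K}(x,y)$ is contained in $\mathbb{K}(\mathcal{C}_n)^M$; since $[\mathbb{K}(\mathcal{C}_n):\mathbb{K}(\mathcal{H}_q)]=m=|M|$ by construction of the fiber product, we conclude $\mathcal{C}_n/M=\mathcal{H}_q$. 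For $N$ of order $q^n+1$: if $\zeta^{q^n+1}=1$, then $g_\zeta$ fixes $x$, so $\mathbb{K}(x)\subseteq \mathbb{K}(\mathcal{C}_n)^N$. The tower $\mathcal{C}_n\to \mathcal{H}_q\to \mathbb{P}^1_x$ has degree $m(q+1)=q^n+1$, matching $|N|$, so $\mathcal{C}_n/N=\mathbb{P}^1_x$. For $\Sigma$ itself: $\zeta^{q^n+1}$ ranges over all $(q-1)$-st roots of unity, so $x^{q-1}$ is $\Sigma$-invariant; and $[\mathbb{K}(\mathcal{C}_n):\mathbb{K}(x^{q-1})]=(q-1)(q^n+1)=|\Sigma|$, giving $\mathcal{C}_n/\Sigma=\mathbb{P}^1$.

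There is no serious obstacle: the content is entirely the bookkeeping of exponents of $\zeta$ using the three identities $m(q+1)=q^n+1$, $m(q^2-1)=(q^n+1)(q-1)$, and $(\zeta^{q^n+1})^q=\zeta^{q^n+1}$, plus the degree $[\mathbb{K}(\mathcal{C}_n):\mathbb{K}(\mathcal{H}_q)]=m$ already built into the definition of $\mathcal{C}_n$. The only subtle point worth emphasizing is why $x^{q-1}$ (rather than some smaller power) generates the fixed field of $\Sigma$ over $\mathbb{K}$: this follows purely from the degree count rather than any further function field computation.
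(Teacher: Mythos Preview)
Your proof is correct and follows essentially the same approach as the paper: verify that $g_\zeta$ preserves both defining equations via the identities $(\zeta^{q^n+1})^q=\zeta^{q^n+1}$ and $\zeta^{m(q^2-1)}=1$, then identify each quotient by exhibiting an invariant subfield and matching its index against the group order. The paper's proof is terser (it writes ``As before'' for the quotient computations, relying on the analogous argument in the preceding lemma), but your explicit degree counts for $M$, $N$, and $\Sigma$ are exactly what is intended.
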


\begin{proof}
Note that $\Sigma$ stabilizes $\mathcal{H}_q$.   Let $(x,y)$ be an affine point of $\mathcal{H}_q$.  Then
\begin{eqnarray*}
g_{\zeta}(x)^q+g_{\zeta}(x)-g_{\zeta}(y)^{q+1}&=&\zeta^{q(q^n+1)}x^q+\zeta^{q^n+1}x-\zeta^{q^n+1} y^{q+1}\\
&=&\zeta^{q^n+1}(x^q+x-y^{q+1})\\
 &=& 0.
\end{eqnarray*}
Also $\Sigma$ stabilizes $\mathcal{X}_n$.  Let $(y,z)$ be an affine point of $\mathcal{X}_n$. Then
\begin{eqnarray*}
g_{\zeta}(y)^{q^2}-g_{\zeta}(y)-g_{\zeta}(z)^m&=&\zeta^{mq^2} y^{q^2}-\zeta^m y-\zeta^m z^m\\
&=&\zeta^m (\zeta^{m(q^2-1)}y^{q^2}-y-z^{m})\\
&=&\zeta^m( y^{q^2}-y-z^{m})\\
&=& 0.
\end{eqnarray*}

So $\Sigma$ stabilizes the fiber product $\mathcal{C}_n$.  Therefore $\Sigma\subset \textrm{Aut}(\mathcal{C}_n)$.

As before, $\mathbb{K}(\mathcal{C}_n/M)\cong\mathbb{K}(\mathcal{H}_q)$ and $\mathbb{K}(\mathcal{C}_n/N)\cong\mathbb{K}(x)$.  
So $\mathcal{C}_n/N$ is a projective line denoted $\mathbb{P}^1_x$.   
Also $\mathbb{K}(\mathcal{C}_n/\Sigma)\cong \mathbb{K}(u)$, where $u=x^{q-1}$, 
and so $\mathcal{C}_n/\Sigma$ is a projective line denoted $\mathbb{P}_u^1$.  
\end{proof}

\begin{lemma}\label{semidirect}
There is a homomorphism
$\phi:\Sigma\rightarrow \textrm{Aut}(Q)$ given by $g_{\zeta}\mapsto \psi_{\zeta}$, 
where $\psi_{\zeta}:Q_{a,b}\mapsto g_{\zeta}Q_{a,b}g_{\zeta}^{-1}$. 
\end{lemma}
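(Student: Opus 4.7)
The plan is a direct computation: show that for every $g_\zeta \in \Sigma$ and every $Q_{a,b} \in Q$, the conjugate $g_\zeta Q_{a,b} g_\zeta^{-1}$ again lies in $Q$. Once $\Sigma$ is known to normalize $Q$ inside $\textrm{Aut}(\mathcal{C}_n)$, each $\psi_\zeta$ is automatically an automorphism of $Q$, and the fact that $\phi$ is a homomorphism is immediate from $\psi_{\zeta_1}\psi_{\zeta_2} = \psi_{\zeta_1\zeta_2}$, which reduces to $g_{\zeta_1} g_{\zeta_2} = g_{\zeta_1\zeta_2}$ in $\Sigma$.

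To carry out the computation I would apply the explicit formulas from Section~\ref{Q} to the coordinates $x,y,z$, composing $g_\zeta^{-1}$, then $Q_{a,b}$, then $g_\zeta$. The coordinate $z$ is fixed, and a short check gives
\[
(g_\zeta Q_{a,b} g_\zeta^{-1})(y) = y + \zeta^{-m} b, \qquad (g_\zeta Q_{a,b} g_\zeta^{-1})(x) = x + \zeta^{m-(q^n+1)} b^q\, y + \zeta^{-(q^n+1)} a.
\]
This has the shape of $Q_{a', b'}$ with $a' = \zeta^{-(q^n+1)} a$ and $b' = \zeta^{-m} b$, the matching of the coefficient of $y$ being controlled by the congruence $-mq \equiv m - (q^n+1)$, which is nothing but the identity $m(q+1) = q^n + 1$ from the definition of $m$.

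The remaining point is to verify that $(a', b')$ still satisfies the Hermitian relation $a'^q + a' = b'^{q+1}$, so that $Q_{a',b'}$ is indeed a valid element of $Q$. Here I would use the observation recorded just after the definition of $g_\zeta$, namely that $\zeta^{q^n+1} \in \mathbb{F}_q$, whence $(\zeta^{-(q^n+1)})^q = \zeta^{-(q^n+1)}$. Then $a'^q + a' = \zeta^{-(q^n+1)}(a^q + a) = \zeta^{-(q^n+1)} b^{q+1}$, while $b'^{q+1} = \zeta^{-m(q+1)} b^{q+1} = \zeta^{-(q^n+1)} b^{q+1}$, and the two coincide. I do not anticipate any genuine obstacle; the only place one can go wrong is keeping the direction of conjugation and the sign of the $\zeta$-exponents straight, and both pieces of bookkeeping are governed entirely by the two identities $m(q+1) = q^n+1$ and $\zeta^{q^n+1} \in \mathbb{F}_q$.
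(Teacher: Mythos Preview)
Your proposal is correct and follows essentially the same direct computation as the paper: compute the conjugate on the coordinates $x,y,z$, identify it as $Q_{a',b'}$, and check the Hermitian relation $a'^q+a'=b'^{\,q+1}$ using $m(q+1)=q^n+1$ and $\zeta^{q^n+1}\in\mathbb{F}_q$. The only discrepancy is a harmless sign convention: the paper obtains $\psi_\zeta(Q_{a,b})=Q_{\zeta^{q^n+1}a,\ \zeta^{m}b}$, whereas your computation yields $Q_{\zeta^{-(q^n+1)}a,\ \zeta^{-m}b}$, reflecting the opposite choice of whether the displayed formulas act on points or on functions; either way $\Sigma$ normalizes $Q$ and $\phi$ is a homomorphism, exactly the bookkeeping point you flagged.
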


\begin{proof}
Let $g_{\zeta}\in \Sigma$ and $Q_{a,b}\in Q$ as above.  Then
\begin{center}
$\begin{array}{cccc}

\psi_{\zeta}(Q_{a,b}): & x &\mapsto & x+b^q\zeta^{q^n+1-m}y+a\zeta^{q^n+1}\\

& y & \mapsto & y+\zeta^{m}b\\

& z & \mapsto & z.
\end{array}$
\end{center}
Since $\zeta^{q^n+1-m}=(\zeta^m)^q$, this means that $\psi_{\zeta}(Q_{a,b})=Q_{\zeta^{q^n+1}a, \zeta^{m} b}$.  
This is a well-defined element of $Q$ because 
\[(\zeta^{m} b)^{q+1}=\zeta^{q^n+1}b^{q+1}=\zeta^{q^n+1}(a+a^q)=\zeta^{q^n+1}a+(\zeta^{q^n+1}a)^q.\]  
\end{proof}

By Lemma \ref{semidirect}, the group generated by $Q$ and $\Sigma$ is a semi-direct product 
$\Gamma:= Q \rtimes_\phi \Sigma$.  

\begin{proposition} \label{Pautsub}
The group $\textrm{Aut}(\mathcal{C}_n)$ contains a subgroup isomorphic to the semi-direct product 
$\Gamma= Q \rtimes_{\phi} \Sigma$.
\end{proposition}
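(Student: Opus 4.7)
The plan is to recognize $\langle Q,\Sigma\rangle\subset \textrm{Aut}(\mathcal{C}_n)$ as an internal semi-direct product with the action $\phi$ specified in Lemma \ref{semidirect}. The earlier lemmas do almost all of the work: Lemma \ref{Qinaut} and Lemma \ref{Ginaut} place $Q$ and $\Sigma$ inside $\textrm{Aut}(\mathcal{C}_n)$, while Lemma \ref{semidirect} shows that conjugation by $\Sigma$ preserves $Q$ and realizes exactly the action $\phi$. Thus $Q$ is automatically normal in $\langle Q,\Sigma\rangle$.

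The only remaining point I would verify is that $Q\cap \Sigma=\{1\}$, so that $\langle Q,\Sigma\rangle$ actually decomposes as a (non-trivial) semi-direct product rather than a smaller quotient. For this, I would use the coordinate $z$ on $\mathcal{C}_n$: by the defining formulas, every element $Q_{a,b}\in Q$ acts as the identity on $z$, whereas a non-identity element $g_{\zeta}\in\Sigma$ sends $z\mapsto \zeta z$ with $\zeta\neq 1$. Hence any element lying in both $Q$ and $\Sigma$ must fix $z$ and satisfy $\zeta=1$, so must be the identity.

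With $Q$ normal, $\Sigma$ acting by $\phi$, and $Q\cap\Sigma=\{1\}$, the standard criterion for internal semi-direct products shows that the multiplication map
\[
Q\rtimes_{\phi}\Sigma \longrightarrow \langle Q,\Sigma\rangle\subset \textrm{Aut}(\mathcal{C}_n),\qquad (Q_{a,b},\,g_{\zeta})\mapsto Q_{a,b}\,g_{\zeta}
\]
is an injective group homomorphism onto its image, which gives the desired subgroup isomorphic to $\Gamma$.

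I do not anticipate a real obstacle here: the statement is essentially a bookkeeping consequence of the three preceding lemmas, and the only genuine verification is the trivial-intersection check, which follows from the distinct actions of $Q$ and $\Sigma$ on the coordinate $z$.
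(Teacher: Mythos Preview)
Your proposal is correct and follows essentially the same approach as the paper: both arguments invoke Lemmas \ref{Qinaut}, \ref{Ginaut}, and \ref{semidirect} to place $Q$ and $\Sigma$ in $\textrm{Aut}(\mathcal{C}_n)$ with $\Sigma$ normalizing $Q$ via $\phi$, and then verify the internal semi-direct product structure. The only cosmetic difference is in the trivial-intersection step: you check $Q\cap\Sigma=\{1\}$ by comparing the actions on the coordinate $z$, whereas the paper simply observes that $|Q|=q^3$ and $|\Sigma|=(q^n+1)(q-1)$ are coprime, so Lagrange forces the intersection to be trivial.
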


\begin{proof}
By Lemmas \ref{Qinaut} and \ref{Ginaut}, $Q$ and $\Sigma$ are contained in $\text{Aut}(\mathcal{C}_n)$.
By Lemma \ref{semidirect}, $\Sigma$ normalizes $Q$ in $\text{Aut}(\mathcal{C}_n)$.
Since $|Q|$ and $|\Sigma|$ are relatively prime, 
the group generated by $\Sigma$ and $Q$ in $\text{Aut}(\mathcal{C}_n)$ is a semi-direct product 
$\Gamma=Q\rtimes_{\phi} \Sigma$. 
\end{proof}

\subsection{Quotients of ${\mathcal C}_n$}

The semi-direct product $\Gamma=Q\rtimes_{\phi} \Sigma$ is not a direct product, 
but it does contain several subgroups which are direct products.

\begin{lemma}
An element $g \in \Sigma$ commutes with every element of $Q$ (resp.\ $Z$) if and only if 
$g \in M$ (resp.\ $N$).
The group $\Gamma$ contains subgroups isomorphic to $Q\times M$ and $Z\times N$.  
The center of $\Gamma$ is $M$.
\end{lemma}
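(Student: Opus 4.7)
The plan is to exploit the explicit conjugation formula from Lemma \ref{semidirect}, namely
\[
g_\zeta Q_{a,b} g_\zeta^{-1} = Q_{\zeta^{q^n+1} a,\ \zeta^m b},
\]
and reduce everything to elementary statements about powers of $\zeta$ in the cyclic group $\mu_{(q^n+1)(q-1)}$.

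First I would characterize which $g_\zeta \in \Sigma$ centralize $Q$. Commuting with $Q_{a,b}$ is equivalent to $\zeta^{q^n+1}a = a$ and $\zeta^m b = b$. Since the trace map $\mathbb{F}_{q^2} \to \mathbb{F}_q$ is surjective, admissible pairs $(a,b)$ with both $a$ and $b$ nonzero do exist, so centralizing all of $Q$ forces $\zeta^{q^n+1} = 1$ and $\zeta^m = 1$. The key observation is the identity $q^n+1 = m(q+1)$, which shows that $\zeta^m = 1$ already implies $\zeta^{q^n+1} = 1$, so the condition collapses to $\zeta^m = 1$. Since $\Sigma$ is cyclic and $M$ is its unique subgroup of order $m$, this condition characterizes precisely $M$. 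The analysis for $Z = \{Q_{a,0} : a^q+a=0\}$ is similar: only the condition $\zeta^{q^n+1}a = a$ for a nonzero $a$ in $\ker(\mathrm{Tr})$ is relevant, yielding $\zeta^{q^n+1} = 1$, which characterizes $N$.

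Next I would deduce the direct product structures. Since $|M| = m$ divides $q^n+1$ and hence is coprime to $|Q| = q^3$, we have $M \cap Q = \{1\}$; together with the fact that $M$ centralizes $Q$, this shows $QM \cong Q \times M$ inside $\Gamma$. The same coprimality argument (or the fact that $Z \le Q$ while $N \le \Sigma$ intersect trivially) combined with the centralizer statement for $Z$ gives $ZN \cong Z \times N$.

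For the center, I would write an arbitrary element of $\Gamma$ as $Q_{a,b}g_\zeta$ and impose that it commute with all of $\Sigma$. Since $\Sigma$ is abelian, this reduces to $\psi_{\zeta'}(Q_{a,b}) = Q_{a,b}$ for every $\zeta' \in \mu_{(q^n+1)(q-1)}$; choosing $\zeta'$ a generator forces $a=b=0$. Then requiring $g_\zeta$ to commute with all of $Q$ places $g_\zeta$ in $M$ by the first step, so $Z(\Gamma) \subseteq M$. Conversely $M \subseteq \Sigma$ is abelian and centralizes $Q$, so $M$ commutes with the generating set $Q \cup \Sigma$ of $\Gamma$ and is central. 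Hence $Z(\Gamma) = M$. There is no real obstacle; the one substantive ingredient beyond routine computation is the divisibility relation $q^n+1 = m(q+1)$ that makes the two constraints on $\zeta$ collapse to one.
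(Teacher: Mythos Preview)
Your proof is correct and follows essentially the same approach as the paper: both rest on the conjugation formula $g_\zeta Q_{a,b} g_\zeta^{-1} = Q_{\zeta^{q^n+1}a,\ \zeta^m b}$ and reduce everything to conditions on powers of $\zeta$. You spell out more of the details---in particular the role of the identity $q^n+1 = m(q+1)$ and the full computation of $Z(\Gamma)$---where the paper simply asserts that ``all the claims follow from these conjugation computations.''
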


\begin{proof}
Notice that 
\[g_{\zeta}Q_{a,b}g_{\zeta}^{-1}=Q_{\zeta^{q^n+1}a, \zeta^{m} b}.\]
For $b \neq 0$, we have $g_{\zeta}Q_{a,b}g_{\zeta}^{-1}=Q_{a,b}$ if and only if $g_{\zeta} \in M$.  
If $b=0$, then $g_{\zeta}Q_{a,0}g_{\zeta}^{-1}=Q_{a,0}$ if and only if $g_{\zeta}\in N$.   
All the claims follow from these conjugation computations.
\end{proof}

The following diagram summarizes the coverings described above:

\begin{center}
$\begin{array}{ccccccc}
& & q & & q^2 & & \\
&\mathcal{C}_n&\rightarrow&\mathcal{X}_n  & \rightarrow&\mathbb{P}^1_z &\\

m&\downarrow& & \downarrow& &\downarrow&\\
&\mathcal{H}_q&\rightarrow&\mathbb{P}^1_y&\rightarrow&\mathbb{P}^1_t &\\
q+1 &\downarrow& &\downarrow && &\\
&\mathbb{P}^1_x & \rightarrow&\mathbb{P}^1_w&  & &\\
q-1 & \downarrow& && & &\\
& \mathbb{P}^1_u&&&
\end{array}$
\end{center}
The numbers next to the arrows are the degrees of the coverings.  
The projective line $\mathbb{P}^1_w$ denotes the curve $\mathcal{C}_n/(Z\times N)$, 
where $\mathbb{K}(\mathcal{C}_n/(Z\times N))\cong \mathbb{K}(w)$ with $w=y^{q+1}$.  
The projective line $\mathbb{P}^1_t$ denotes the curve $\mathcal{C}_n/(Q\times M)$, 
where $\mathbb{K}(\mathcal{C}_n/(Q\times M))\cong \mathbb{K}(t)$ with $t=z^{m}$.  

\subsection{The automorphism group of $\mathcal{X}_n$} \label{SautX}

\begin{proposition} \label{autxn}
The automorphism group of $\mathcal{X}_n$ fixes the point at infinity and 
$\textrm{Aut}(\mathcal{X}_n)=(Q/Z) \rtimes_{\phi} \Sigma$.
\end{proposition}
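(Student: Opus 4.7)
The plan is in two steps: first, use \cite[Thm.\ 12.11]{HKT08} to show that $\textrm{Aut}(\mathcal{X}_n)$ fixes $\infty_{\mathcal{X}_n}$, and then exploit local theory at this fixed point to pin down the full group structure.

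For the first step, observe that since $Z$ is normal in $\Gamma$ and $\mathcal{C}_n / Z = \mathcal{X}_n$ by Lemma \ref{Qinaut}, the quotient $\Gamma / Z \cong (Q/Z) \rtimes_\phi \Sigma$ embeds into $\textrm{Aut}(\mathcal{X}_n)$. In particular $Q/Z$ is a $p$-subgroup of order $q^2 = p^{2h}$; from the explicit action $y \mapsto y+b$, $z \mapsto z$ combined with Lemma \ref{cninfinity}, one checks that $\infty_{\mathcal{X}_n}$ is the unique fixed point of $Q/Z$ and that every other $Q/Z$-orbit on $\mathcal{X}_n$ has size $q^2$. This is the ``big action'' input required by \cite[Thm.\ 12.11]{HKT08}, which then forces all of $\textrm{Aut}(\mathcal{X}_n)$ to fix $\infty_{\mathcal{X}_n}$.

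For the second step, write $G := \textrm{Aut}(\mathcal{X}_n) = G_0$, the decomposition group at $\infty_{\mathcal{X}_n}$. Standard local theory gives $G = G_1 \rtimes C$, where $G_1$ is a Sylow $p$-subgroup (the wild inertia) and $C$ is cyclic of order prime to $p$. Since $Q/Z \subseteq G_1$ and $\Sigma \subseteq C$, it suffices to prove $|G_1| = q^2$ and $|C| = (q^n+1)(q-1)$. For the wild part, I would compute the higher ramification breaks of $Q/Z$ at $\infty_{\mathcal{X}_n}$ using the Artin--Schreier form $y^{q^2}-y = z^m$ (with $v(y) = -m$ and $v(z) = -q^2$), then apply Riemann--Hurwitz to the cover $\mathcal{X}_n \to \mathcal{X}_n/(Q/Z) = \mathbb{P}^1_z$: since $\mathcal{X}_n \to \mathbb{P}^1_z$ has a single branch point, any strictly larger $p$-group of automorphisms fixing $\infty_{\mathcal{X}_n}$ would inflate the different divisor beyond what the Riemann--Hurwitz count allows, so $G_1 = Q/Z$. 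For the tame part, $C$ acts faithfully on $\mathfrak{m}_{\infty}/\mathfrak{m}_{\infty}^2$ via a character $\chi\colon C \to \mathbb{K}^\times$; the compatibility of $\chi$ with conjugation on $G_1 = Q/Z$ (recorded by $\phi$ in Lemma \ref{semidirect}) bounds $|C|$ by $(q^n+1)(q-1)$, so $C = \Sigma$.

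The main obstacle is the ramification bookkeeping that rules out a larger wild inertia; once this and the application of \cite[Thm.\ 12.11]{HKT08} are settled, the identification $\textrm{Aut}(\mathcal{X}_n) = (Q/Z) \rtimes_\phi \Sigma$ follows from the semi-direct structure of the decomposition group at a smooth fixed point.
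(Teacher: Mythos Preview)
Your first step matches the paper's: both invoke \cite[Thm.\ 12.11]{HKT08} to conclude that $\textrm{Aut}(\mathcal{X}_n)$ fixes $\infty_{\mathcal{X}_n}$. (A minor point: the paper applies the theorem directly to the equation shape $A(y)=B(z)$ with $A$ additive, rather than via a ``big action'' criterion; check that your phrasing of the hypothesis actually matches the cited theorem.)

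The divergence is in step~2, and here your sketch has genuine gaps while the paper's argument is both different and complete. The paper does \emph{not} use Riemann--Hurwitz or ramification bounds to pin down $G_1$ and $C$. Instead, following \cite[Thm.\ 12.7]{HKT08}, it observes that any $\alpha$ fixing $\infty_{\mathcal{X}_n}$ preserves the linear series $|q^2\infty_{\mathcal{X}_n}|$ and $|m\,\infty_{\mathcal{X}_n}|$. This forces $\alpha(z)=cz+d$ and $\alpha(y)=ay+Q(z)$ with $\deg Q < m/q^2$; substituting into $y^{q^2}-y=z^m$ and comparing terms yields $a^{q^2}=a=c^m$, $d=0$, and $Q$ constant in $\mathbb{F}_{q^2}$. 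This immediately gives $|G_1|\mid q^2$ and $|C|\mid (q-1)(q^n+1)$, and the containment $(Q/Z)\rtimes_\phi\Sigma\subset\textrm{Aut}(\mathcal{X}_n)$ turns these into equalities.

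By contrast, your Riemann--Hurwitz argument for $G_1$ does not work as stated. Riemann--Hurwitz is an identity, not a bound: saying a larger $G_1$ would ``inflate the different beyond what Riemann--Hurwitz allows'' presupposes you already know the ramification filtration of $G_1$, which is exactly what is unknown. You know the jump $m$ for the subgroup $Q/Z$, but a hypothetical larger $G_1$ could have additional (smaller) jumps consistent with the genus formula; nothing you wrote rules this out. Likewise, for the tame part, the faithful action of $C$ on the cotangent line only gives that $C$ is cyclic; the conjugation action on $Q/Z\cong(\mathbb{Z}/p)^{2h}$ lands in $\mathrm{GL}_{2h}(\mathbb{F}_p)$, whose cyclic subgroups can have many orders, so ``compatibility with $\phi$'' does not by itself force $|C|\le (q^n+1)(q-1)$. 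The linear-series argument bypasses both issues by producing the explicit form of every automorphism.
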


\begin{proof}
Let $I_{\mathcal{X}_n}$ be the inertia group at the point at infinity $\infty_{\mathcal{X}_n}$ 
of the cover $\mathcal{X}_n \to \mathcal{X}_n/\text{Aut}(\mathcal{X}_n)$.  
The curve $\mathcal{X}_n$ is defined by the equation $A(y)=B(z)$, where $A(y)=y^{q^2}-y$ and $B(z)=z^{m}$.
Notice that $A(y)$ has the property that $A(y+a)=A(y)+A(a)$ for $a \in {\mathbb F}_{q^2}$.  
Automorphisms of this kind of curve are studied in \cite[Chapter 12]{HKT08} 
(under the unnecessary hypothesis that they are defined over ${\mathbb F}_{q^2}$.)
By \cite[Thm.\ 12.11]{HKT08}, all (geometric) automorphisms of ${\mathcal X}_n$ fix $\infty_{{\mathcal X}_n}$, 
so $\text{Aut}(\mathcal{X}_n)=I_{\mathcal{X}_n}$.

We follow the proof of \cite[Thm.\ 12.7]{HKT08} to describe automorphisms $\alpha \in I_{\mathcal{X}_n}$.
First, $I_{\mathcal{X}_n}$ is a semi-direct product of a cyclic subgroup $H$ of order prime-to-$p$ 
with a normal subgroup $G_1$ which is a $p$-group by \cite[Chapter IV, Cor.\ 4]{SerreLF}.
The automorphism $\alpha$ preserves each of the linear series $|q^2\infty_{\mathcal{X}_n}|$ and $|m \infty_{\mathcal{X}_n}|$.
If $m > q^2$ (the other case is similar), this implies that $\alpha(z)= cz +d$ and $\alpha(y)=ay+Q(z)$ with 
$a,c,d \in \overline{\mathbb F}_p$ and $Q(z) \in  \overline{\mathbb F}_p[z]$ with ${\rm deg}(Q(z))< m/q^2$.
Then $A(ay+Q(z))-B(cz+d)$ must be a constant multiple of $A(y)-B(z)$.  
Studying the leading terms shows that $a^{q^2}=a=c^m$ and so $a \in {\mathbb F}_{q^2}$ and $c \in {\mathbb F}_{q^{2n}}$.
The next term shows that $d=0$ and $Q(z)$ is a constant in ${\mathbb F}_{q^2}$.
Thus $|H|$ divides $(q-1)(q^n+1)$ and $|G_1|$ divides $q^2$.
Now $Z$ is normal in $\Gamma$ and $\mathcal{X}_n=\mathcal{C}_n/Z$, 
so $(Q \rtimes_{\phi} \Sigma)/Z \subset \text{Aut}(\mathcal{X}_n)$.
Thus $I_{\mathcal{X}_n} =(Q/Z) \rtimes_{\phi} \Sigma$.
\end{proof}

\section{The automorphism group of ${\mathcal C}_n$} \label{ramification}

Giulietti and Korchmaros determined the automorphism group for the curve $\mathcal{C}_3$. 

\begin{theorem}  \label{TGK} \cite[Thm.\ 6]{GK09}
 If $q\equiv 1 \bmod 3$, then 
 \[\textrm{Aut}(\mathcal{C}_3)\cong \textrm{SU}(3,q^2)\times \mathbb{Z}/((q^3+1)/(q+1)).\]  
 
 If $q\equiv 2 \bmod 3$, then there exists $G\vartriangleleft \textrm{Aut}(\mathcal{C}_3$) 
such that  $|\textrm{Aut}(\mathcal{C}_3):G|=3$ and
 \[G\cong \textrm{SU}(3,q^2)\times \mathbb{Z}/((q^3+1)/3(q+1)).\]
\end{theorem}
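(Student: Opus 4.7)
The plan is to produce a subgroup of $\textrm{Aut}(\mathcal{C}_3)$ of the claimed order and then show it is the whole group (or contains the claimed index-$3$ subgroup). For the lower bound, I would invoke the natural embedding theorem of \cite{KTem}: $\mathcal{C}_3$ embeds as a smooth curve of degree $q+1$ on a non-degenerate Hermitian variety $\mathcal{U} \subset \mathbb{P}^3$. The curve $\mathcal{C}_3$ arises by intersecting $\mathcal{U}$ with a second hypersurface, and its stabilizer in $\textrm{PGU}(4,q^2) = \textrm{Aut}(\mathcal{U})$ contains a natural copy of $\textrm{SU}(3,q^2)$, namely the unitary group acting on the $(x,y)$-Hermitian curve $\mathcal{H}_q$, lifted through the fiber product (this works for $n=3$ because the singularity obstruction noted in the remark after Lemma \ref{cninfinity} only appears for $n\geq 5$). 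Separately, the scalar map $(x,y,z)\mapsto (x,y,\zeta z)$ for $\zeta\in\mu_{q^2-q+1}$ preserves both defining equations, giving a cyclic subgroup of order $(q^3+1)/(q+1)=q^2-q+1$.

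The case distinction comes from analyzing the interaction of these two subgroups. The center of $\textrm{SU}(3,q^2)$ has order $d=\gcd(3,q+1)$. When $q\equiv 1\bmod 3$ we have $d=1$, so the two subgroups centralize one another and intersect trivially, and their product has precisely the claimed order. When $q\equiv 2\bmod 3$ we have $d=3$ and $3\mid q^2-q+1$, so the order-$3$ central element of $\textrm{SU}(3,q^2)$ coincides with an element of the cyclic group; to get a direct product one must replace the cyclic factor by its unique subgroup of index $3$, yielding the subgroup $G$ of index $3$ in the overall group. An outer element of order $3$ permuting the three ``special'' central fixed structures then accounts for the full index-$3$ extension.

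For the upper bound I would argue that no further automorphisms exist by using an intrinsic characterization of the embedding. A suitable linear series on $\mathcal{C}_3$ (coming from a multiple of $P_\infty$ together with appropriate twists) is preserved by every automorphism, so $\textrm{Aut}(\mathcal{C}_3)$ acts projectively on $\mathbb{P}^3$. Uniqueness in \cite{KTem} forces the action to preserve $\mathcal{U}$, so $\textrm{Aut}(\mathcal{C}_3)$ embeds into the stabilizer of $\mathcal{C}_3$ in $\textrm{PGU}(4,q^2)$; a direct matrix computation then identifies this stabilizer with the subgroup already exhibited.

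The main obstacle will be the upper bound: proving that every automorphism is linear in the natural embedding, and then pinning down the stabilizer of $\mathcal{C}_3$ in $\textrm{PGU}(4,q^2)$ exactly. The latter step is especially delicate when $q\equiv 2\bmod 3$, where the nontrivial intersection of $\textrm{SU}(3,q^2)$ with the scalar group and the existence of an outer order-$3$ element must both be handled; this is presumably why the theorem only describes a normal subgroup $G$ of index $3$ rather than the full automorphism group in that case.
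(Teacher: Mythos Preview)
This theorem is not proved in the present paper at all: it is quoted verbatim from \cite[Thm.\ 6]{GK09} as background, and no argument for it appears here. So there is no ``paper's own proof'' to compare your proposal against.

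That said, your sketch is broadly in the spirit of the original Giulietti--Korchm\'aros argument. A few remarks on the proposal itself. The lower-bound part is reasonable: for $n=3$ one really does have that $\mathcal{C}_3$ sits on the Hermitian variety in $\mathbb{P}^3$, and the action of $\textrm{SU}(3,q^2)$ together with the scaling on $z$ by $m$-th roots of unity is exactly how Giulietti and Korchm\'aros exhibit the large automorphism group. Your handling of the $\gcd(3,q+1)$ dichotomy is also the correct mechanism.

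The upper-bound sketch is where your proposal is thinnest. The assertion that ``a suitable linear series is preserved by every automorphism, so $\textrm{Aut}(\mathcal{C}_3)$ acts projectively on $\mathbb{P}^3$'' is the heart of the matter, and you have not identified which linear series or why it is canonical. In \cite{GK09} this is done by showing that the embedding comes from a specific complete linear system (essentially $|(q+1)P_\infty|$, tied to the Weierstrass semigroup at $P_\infty$) which is intrinsically determined by the curve, so any automorphism must permute the hyperplane sections and hence act linearly. Without naming and justifying this linear system, the step ``every automorphism is linear in the natural embedding'' is a genuine gap rather than a routine detail. The subsequent stabilizer computation in $\textrm{PGU}(4,q^2)$ is also nontrivial and is carried out explicitly in \cite{GK09}; your ``direct matrix computation'' glosses over several pages of work there.
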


In this section, we determine the automorphism group of $\mathcal{C}_n$ for $n \geq 5$ odd.
The strategy is first to use ramification theory to show that $\Gamma$ is the inertia group at $P_\infty$, see Proposition \ref{TautC}.
Second, by studying the orbits of $\Gamma$ on $\mathcal{C}_n$, 
we prove that the Sylow $p$-subgroups of $\textrm{Aut}(C_n)$ 
are isomorphic to $Q$ in Proposition \ref{Csyl}.
We produce new structural results about automorphism groups of curves in positive characteristic in Section \ref{Sbob},  
which show, in particular, that $M$ is normal in $\textrm{Aut}(C_n)$.
Determining the automorphism group thus reduces to determining which automorphisms of the Hermitian curve $H_q$ 
lift to automorphisms of ${\mathcal C}_n$, which we answer in Proposition \ref{Pliftw}.

\subsection{Background on higher ramification groups}

A good reference about ramification theory is \cite[Chapter IV]{SerreLF}. 

\begin{definition}
Let $f:\mathcal{Y}^{\prime}\rightarrow\mathcal{Y}$ be a $G$-Galois cover of curves.  
Let $P^{\prime}$ be a point on $\mathcal{Y}^{\prime}$ and let $P=f(P^{\prime})$.
For $i\geq -1$, define the $i$-th ramification group at $P^{\prime}$ by 
\[G_i(P^{\prime}|P)=\{ \sigma \in G \mid v_{P^{\prime}}(\sigma(z)-z)\geq i+1 \hspace{.1in} \forall  \hspace{.1in}  z\in\mathcal{O}_{P^{\prime}}\}.\]
\end{definition}

An integer $i$ for which $G_i\neq G_{i+1}$ is a {\it lower jump} of the filtration of higher ramification groups at $P^{\prime}$.
Here are some facts about these ramification groups.

\begin{lemma} \label{Lramfact}
\begin{description}
\item{(i)} $G\geq G_{-1}\trianglerighteq G_0 \trianglerighteq ... \trianglerighteq G_i \trianglerighteq G_{i+1} \trianglerighteq ...$ and 
$G_N=\{\textrm{id}\}$ for $N >>0$.

\item{(ii)} $|G_0|=e(P^{\prime}|P)$ is the ramification index of $P^{\prime}$ over $P$.  

\item{(iii)} Let $H\leq G$ and let $H_i$ denote the $i$-th ramification group for the $H$-Galois cover 
${\mathcal Y}^{\prime} \to {\mathcal Y}^{\prime}/H$.  Then $H_i=G_i\cap H$.  

\item{(iv)} The order of $G_1$ is a power of $p$, the quotient $G_0/G_1$ is cyclic with prime-to-$p$ order, and $G_i/G_{i+1}$ is elementary abelian of exponent $p$ for $i\geq 1$.

\item{(v)} If $s\in G_i, t\in G_j$, and $i,j\geq 1$, then $sts^{-1}t^{-1}\in G_{i+j+1}$.
\end{description}
\end{lemma}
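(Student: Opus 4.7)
The plan is to derive all five statements from the standard structural results on higher ramification groups in the lower numbering, as developed in \cite[Chapter IV]{SerreLF}. After passing to the completion $\widehat{\mathcal{O}}_{P^{\prime},\mathcal{Y}^{\prime}}$ of the local ring at $P^{\prime}$, the filtration $\{G_i(P^{\prime}|P)\}$ becomes the classical lower-numbering ramification filtration for the Galois extension of complete discretely valued fields $\mathrm{Frac}(\widehat{\mathcal{O}}_{P^{\prime}}) / \mathrm{Frac}(\widehat{\mathcal{O}}_P)$. Once this identification is in place, each of (i)--(v) is a direct translation of a result in Serre.

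I would handle (i)--(iii) first, as each is essentially a bookkeeping exercise. For (i), each $G_i$ with $i \geq 0$ is the kernel of the natural action of $G_0$ on $\mathcal{O}_{P^{\prime}}/\mathfrak{m}_{P^{\prime}}^{i+1}$, so $G_{i+1} \trianglelefteq G_i$ automatically; and the filtration terminates because a continuous automorphism of the complete local ring that fixes a uniformizer to arbitrarily high order must be the identity. Part (ii) is the identification of $G_0$ with the inertia subgroup at $P^{\prime}$ together with the fact that the inertia subgroup of a Galois cover has order equal to the ramification index. Part (iii) follows straight from the definition: the condition $v_{P^{\prime}}(\sigma(z) - z) \geq i+1$ for all $z \in \mathcal{O}_{P^{\prime}}$ depends only on $\sigma$ and $P^{\prime}$, so $H_i = \{\sigma \in H : \sigma \in G_i\} = G_i \cap H$.

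For parts (iv) and (v), I would follow the strategy in \cite[Chapter IV, Sec.\ 2]{SerreLF}: fix a uniformizer $\pi$ of $\widehat{\mathcal{O}}_{P^{\prime}}$ and define, for each $i \geq 0$, a map
\[ \theta_i : G_i \longrightarrow U^{(i)}/U^{(i+1)}, \qquad \sigma \longmapsto \sigma(\pi)/\pi \bmod U^{(i+1)}, \]
where $U^{(0)} = \widehat{\mathcal{O}}_{P^{\prime}}^{\,\times}$ and $U^{(i)} = 1 + \mathfrak{m}_{P^{\prime}}^{\,i}$ for $i \geq 1$. One checks that $\theta_i$ is independent of the choice of $\pi$, is a group homomorphism (this is where the commutativity of the target is used), and has kernel exactly $G_{i+1}$. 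Since $U^{(0)}/U^{(1)}$ is cyclic of prime-to-$p$ order (the multiplicative group of the residue field) and $U^{(i)}/U^{(i+1)}$ for $i \geq 1$ is isomorphic to the additive group of the residue field, (iv) follows. The commutator formula in (v) is obtained by a direct computation: if $s \in G_i$ and $t \in G_j$, then $\theta_i(s)$ and $\theta_j(t)$ are units congruent to $1$ modulo high powers of $\mathfrak{m}_{P^{\prime}}$, and a valuation estimate shows their commutator in the unit group lies in $U^{(i+j+1)}$, yielding $[s,t] \in G_{i+j+1}$.

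The main obstacle is purely technical: verifying that the definition of $G_i$ given in terms of all $z \in \mathcal{O}_{P^{\prime}}$ matches Serre's definition in terms of a single uniformizer of the completion. This reduces to the observation that $G_0$ acts trivially on the residue field, so by writing $z$ as a power series in $\pi$ with coefficients from a chosen system of residue-class representatives, the valuation condition on an arbitrary $z$ can be checked on $\pi$ alone. Everything else is a citation.
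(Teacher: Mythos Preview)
Your proposal is correct and follows exactly the route the paper takes: the paper's entire proof is the single sentence ``These results can be found in \cite[Chapter IV, Sections 1 \& 2]{SerreLF},'' and you have simply unpacked that citation. One small imprecision: in the geometric setting the residue field is algebraically closed, so $U^{(0)}/U^{(1)} \cong k^\times$ is not itself cyclic---rather, $G_0/G_1$ injects into $k^\times$ and is therefore cyclic because finite subgroups of $k^\times$ are cyclic.
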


\begin{proof}
These results can be found in \cite[Chapter IV, Sections 1 \& 2]{SerreLF}.
\end{proof}

The ramification groups can also be indexed by another system, known as the upper numbering.  
To define the upper numbering, for $u \in {\mathbb R}^{\geq -1}$, 
let $G_u=G_i$, where $i=\lceil u \rceil$.  Define piecewise linear functions $\phi$ and $\psi$ by
\[
\phi(u)=\int^u_0\frac{dt}{(G_0:G_t)}
\]
and $\psi(u)=\phi^{-1}(u)$.  Define the ramification groups in the upper numbering by $G^v=G_{\psi(v)}$.
An index $j$ for which $G^j\neq G^k$ for all $k > j$ is an {\it upper jump} of the filtration of higher ramification groups.  
When the ramification index is a power of $p$, an immediate consequence of the definition 
is that the smallest jump is the same in the upper and lower numbering.
Upper numbering behaves well with respect to quotient groups of $G$.

\begin{lemma} \label{Lramup}
If $H$ is a normal subgroup of $G$ and $(G/H)^v$ denotes the $v$-th ramification group for the 
$G/H$-Galois cover ${\mathcal Y}^{\prime}/H \to {\mathcal Y}$ then $(G/H)^v = (G^v H)/H$.
\end{lemma}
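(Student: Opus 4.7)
This is Herbrand's theorem (compare \cite[Ch.~IV, \S3, Prop.~14]{SerreLF}). The plan is to deduce it from the transitivity of the Herbrand $\phi$ function in the tower ${\mathcal Y}' \to {\mathcal Y}'/H \to {\mathcal Y}$.

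Let $\phi_{G}$ denote the function $\phi$ defined just before the lemma, attached to the $G$-Galois cover ${\mathcal Y}' \to {\mathcal Y}$, and similarly let $\phi_{H}$ and $\phi_{G/H}$ be attached to the $H$-Galois cover ${\mathcal Y}' \to {\mathcal Y}'/H$ and the $(G/H)$-Galois cover ${\mathcal Y}'/H \to {\mathcal Y}$ respectively. Write $\psi_\bullet = \phi_\bullet^{-1}$. The key technical identity is the transitivity formula
\[
\phi_G \;=\; \phi_{G/H} \circ \phi_H.
\]
Once this is in hand, the lemma follows in a few lines. First I would establish the corresponding lower-numbering identity $(G/H)_{\phi_H(u)} = G_u H/H$; then, setting $u = \psi_G(v)$ and using $\psi_G = \psi_H \circ \psi_{G/H}$, one obtains
\[
(G/H)^v \;=\; (G/H)_{\psi_{G/H}(v)} \;=\; G_{\psi_G(v)} H/H \;=\; G^v H/H.
\]

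Both the lower-numbering identity and the transitivity formula reduce to a local computation at $P'$. For $\sigma \in G/H$, choose a preimage $\tilde\sigma \in G$ and set $i_G(\tilde\sigma) = v_{P'}(\tilde\sigma(z) - z)$ for $z$ a uniformizer at $P'$; analogously define $i_{G/H}(\sigma)$ using a uniformizer at $P'' := \pi(P')$, where $\pi\colon {\mathcal Y}' \to {\mathcal Y}'/H$ is the quotient map. Using ${\mathcal O}_{P''} = {\mathcal O}_{P'}^H$ and expressing a uniformizer at $P''$ as a norm-type product over the $H$-coset of $\tilde\sigma$, one proves Serre's key lemma (\cite[Ch.~IV, \S3, Lemma~5]{SerreLF}): $i_{G/H}(\sigma) - 1$ is the average of $i_G(\tilde\tau) - 1$ as $\tilde\tau$ ranges over the $H$-coset of $\tilde\sigma$. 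This is precisely the bridge between the lower-numbering filtrations for $G$ and $G/H$, and a short comparison of the step functions $1/[G_0 : G_u]$ and $1/[(G/H)_0 : (G/H)_u]$ yields the transitivity.

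The main obstacle is the uniformizer computation behind Serre's Lemma~5, which requires a careful case analysis depending on whether $\tilde\sigma$ fixes the closed point $P'$ and on the behavior of $\tilde\sigma$ modulo $H$; everything else in the argument is formal manipulation of the piecewise linear functions $\phi_\bullet$ and $\psi_\bullet$.
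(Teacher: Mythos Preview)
Your proposal is correct and follows exactly the argument in Serre's \emph{Local Fields}; the paper itself does not give an independent proof but simply cites \cite[Chapter IV, Prop.~14]{SerreLF}, so you have in effect unpacked the reference the authors invoke. There is nothing to add.
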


\begin{proof}
This can be found in \cite[Chapter IV, Prop.\ 14]{SerreLF}.
\end{proof}

\subsection{Filtrations at infinity}

\begin{lemma}\label{jumpcntoxn}
There is one break in the ramification filtration of $\mathcal{C}_n\rightarrow\mathcal{X}_n$ at $P_\infty$
and it occurs at index $q^n+1$ in the lower numbering.
\end{lemma}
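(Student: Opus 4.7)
The plan is to pin down the break by combining two pieces of information: (i) the total degree of the different on $\mathcal{C}_n\to\mathcal{X}_n$, computed via Riemann--Hurwitz and localized at $P_\infty$, and (ii) uniqueness of the break, which will follow from a transitivity argument using $\Sigma$.

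First I would observe that by Lemma~\ref{Qinaut} the cover $\mathcal{C}_n\to\mathcal{X}_n$ is $Z$-Galois, and by Lemma~\ref{cninfinity} it is totally ramified at $P_\infty$ of index $q=|Z|$. The defining equation $x^q+x=y^{q+1}$ is of Artin--Schreier type; at any place of $\mathcal{X}_n$ where $y$ is regular the associated polynomial has unit derivative, so the extension is \'etale there, and the only pole of $y$ on $\mathcal{X}_n$ is at $\infty_{\mathcal{X}_n}$. Hence $\mathcal{C}_n\to\mathcal{X}_n$ is ramified only at $P_\infty$. Applying Riemann--Hurwitz with $g_{\mathcal{C}_n}=(q-1)(q^{n+1}+q^n-q^2)/2$ and $g_{\mathcal{X}_n}=(q-1)(q^n-q)/2$ and simplifying, the total degree of the different comes out to $(q-1)(q^n+2)$, all of it concentrated at $P_\infty$. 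Writing $Z_i$ for the ramification groups at $P_\infty$, this reads
\[
\sum_{i\geq 0}(|Z_i|-1)=(q-1)(q^n+2).
\]

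The main step is to show that the filtration has only a single break. For this I would use that $\Sigma$ normalizes $Z$ inside $\Gamma=Q\rtimes\Sigma$ and fixes $P_\infty$, because $\Sigma$ stabilizes $\infty_y$ and $P_\infty$ is the unique point above it by Lemma~\ref{cninfinity}. A direct computation (cf.\ the proof of Lemma~\ref{semidirect}) gives $g_\zeta Q_{a,0}g_\zeta^{-1}=Q_{\zeta^{q^n+1}a,0}$, and as $\zeta$ varies in $\mu_{(q^n+1)(q-1)}$ the scalar $\zeta^{q^n+1}$ ranges over all of $\mu_{q-1}=\mathbb{F}_q^\times$. Under the identification $Z\cong\ker(\operatorname{Tr}_{\mathbb{F}_{q^2}/\mathbb{F}_q})$ via $Q_{a,0}\leftrightarrow a$, which is a one-dimensional $\mathbb{F}_q$-vector space, this action is scalar multiplication by $\mathbb{F}_q^\times$, and is therefore transitive on the nonzero elements. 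Since conjugation by an automorphism that stabilizes $P_\infty$ preserves the break invariants, $i_G(\sigma)=v_{P_\infty}(\sigma(\pi)-\pi)$ is the same for every nontrivial $\sigma\in Z$, which forces a unique break in the filtration.

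Finally, if the unique break occurs at index $b$ in the lower numbering, then $Z_0=Z_1=\cdots=Z_b=Z$ and $Z_{b+1}=\{1\}$, so the sum above equals $(b+1)(q-1)$; comparing with $(q-1)(q^n+2)$ forces $b=q^n+1$. The principal obstacle is the uniqueness of the break: the Riemann--Hurwitz accounting is routine, but showing that all nontrivial $\sigma\in Z$ share a common $i_G$-value requires the explicit $\Sigma$-conjugation formula together with the observation that $\Sigma$ stabilizes $P_\infty$.
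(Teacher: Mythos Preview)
Your argument is correct, but it takes a genuinely different route from the paper's proof. The paper observes directly that $v_{\mathcal{X}_n}(y^{q+1})=-(q^n+1)$ is prime to $p$ and that $x^q+x$ is a separable additive polynomial with all roots in $\mathbb{F}_{q^2}$, then cites \cite[Prop.\ 3.7.10]{Stich93} to read off both the uniqueness of the break and its value $q^n+1$ in one stroke. Your approach instead \emph{computes} the different via Riemann--Hurwitz (using the genera of $\mathcal{C}_n$ and $\mathcal{X}_n$ as input) and then establishes uniqueness of the break by a separate group-theoretic argument: $\Sigma$ fixes $P_\infty$, normalizes $Z$, and acts on $Z\setminus\{1\}$ transitively via scalar multiplication by $\mathbb{F}_q^\times$ on the one-dimensional $\mathbb{F}_q$-space $\ker(\mathrm{Tr}_{\mathbb{F}_{q^2}/\mathbb{F}_q})$. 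The paper's route is quicker and uses nothing beyond the standard Artin--Schreier conductor formula; yours is more self-contained in that it avoids quoting that result, and it nicely foreshadows the role of the $\Sigma$-action, at the cost of relying on the genus formulas (which in the paper are derived from the tame cover $\mathcal{C}_n\to\mathcal{H}_q$, so there is no circularity) and a longer argument overall.
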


\begin{proof}
The degree $q$ cover $\mathcal{C}_n\rightarrow\mathcal{X}_n$ has affine equation $x^q+x=y^{q+1}$.
Let $v_{\mathcal{X}_n}$ denote the valuation at the point of infinity $\infty_{{\mathcal X}_n}$ on $\mathcal{X}_n$.
Then $v_{\mathcal{X}_n}(y)=-m$, since $m$ is the ramification index of ${\mathcal X}_n \to {\mathbb P}^1_y$ above $\infty_y$.
Thus $v_{\mathcal{X}_n}(y^{q+1}) = -(q^n+1)$.
Since $q^n+1$ is prime-to-$p$, and since $x^q+x$ is a separable additive polynomial with all its roots in ${\mathbb F}_{q^2}$, 
the result follows from \cite[Prop.\ 3.7.10]{Stich93}.
\end{proof}

\begin{lemma}\label{ext2}
There is one break in the ramification filtration of $\mathcal{X}_n\rightarrow\mathbb{P}^1_z$ 
at $\infty_{{\mathcal X}_n}$ and it occurs at index $m$ in the lower numbering.
\end{lemma}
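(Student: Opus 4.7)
The approach is direct computation with a local uniformizer. The cover $\mathcal{X}_n\to\mathbb{P}^1_z$ is Galois with group $V\cong(\mathbb{F}_{q^2},+)$ acting by $\sigma_a:y\mapsto y+a$; by (the proof of) Lemma \ref{cninfinity} it is totally ramified at $\infty_{\mathcal{X}_n}$ with $|V_0|=q^2$. Because $V$ is itself a $p$-group, Lemma \ref{Lramfact}(iv) gives $V_0=V_1$, so the entire break structure sits in the wild part of the filtration, and it suffices to compute $i_V(\sigma_a)$ for each nontrivial $\sigma_a$.

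The first concrete step is to pin down pole orders at $P:=\infty_{\mathcal{X}_n}$: total ramification of degree $q^2$ gives $v_P(z)=-q^2$, and the defining equation $y^{q^2}-y=z^m$ (with $y^{q^2}$ the dominant term) yields $v_P(y)=-m$. Since $\gcd(m,q^2)=\gcd(m,p)=1$, I would choose $r,s\in\mathbb{Z}$ with $rm+sq^2=1$ and take $\pi:=y^{-r}z^{-s}$ as a uniformizer at $P$; the congruence $rm\equiv 1\pmod{p}$ forces $\gcd(r,p)=1$, which is crucial below.

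Next, for $a\in V\setminus\{0\}$, I would compute $i_V(\sigma_a)=v_P(\sigma_a(\pi)-\pi)$ by expanding
\[
\sigma_a(\pi)-\pi=\bigl((y+a)^{-r}-y^{-r}\bigr)z^{-s}
\]
as a binomial series in the completed local ring, valid because $v_P(a/y)=m>0$. The leading contribution $-ra\,y^{-r-1}z^{-s}$ does not vanish (since $p\nmid r$ and $a\neq 0$) and has valuation $(r+1)m+sq^2=m+1$, while every subsequent term has valuation at least $2m+1$. Thus $i_V(\sigma_a)=m+1$ uniformly in $a$, so $V_i=V$ for $0\leq i\leq m$ and $V_{m+1}=\{\mathrm{id}\}$, identifying a single break at $m$ in the lower numbering.

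The main technical obstacle is the characteristic-$p$ bookkeeping: one must specifically choose $r$ coprime to $p$ (so the leading binomial coefficient survives) and then verify that the subleading terms really do push the valuation strictly past $m+1$. A cleaner alternative, if one wants to avoid the hands-on expansion, is to cite the generalization of \cite[Prop.\ 3.7.10]{Stich93} to covers $A(y)=f(z)$ in which $A$ is a separable additive $p$-polynomial with all roots in a finite field and $f$ has pole order coprime to $p$, as developed in \cite[Chapter 12]{HKT08}; this packages precisely the computation above.
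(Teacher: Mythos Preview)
Your proof is correct. The paper's own argument is simply the one-line citation you mention at the end: since $y^{q^2}-y$ is a separable additive polynomial with all roots in $\mathbb{F}_{q^2}$ and $v_{\infty_z}(z^m)=-m$ is prime to $p$, the break equals $m$ by \cite[Prop.\ 3.7.10]{Stich93}. So your ``cleaner alternative'' is precisely the paper's proof.

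Your main argument takes a genuinely different, more elementary route: rather than invoking the general Artin--Schreier ramification formula, you build an explicit uniformizer $\pi=y^{-r}z^{-s}$ at $\infty_{\mathcal{X}_n}$ and compute $v_P(\sigma_a(\pi)-\pi)$ directly. This is exactly the computation that underlies Stichtenoth's proposition, specialized to this cover, and it has the virtue of being self-contained. The key observation that any solution of $rm+sq^2=1$ automatically satisfies $p\nmid r$ (so the leading binomial coefficient $-r$ survives in characteristic $p$) is the heart of the matter, and you identify it correctly. One small remark: you need not ``choose'' $r$ coprime to $p$; as you note, the congruence $rm\equiv 1\pmod p$ forces it for every Bezout solution. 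Also, the citation of Lemma~\ref{cninfinity} for total ramification of $\mathcal{X}_n\to\mathbb{P}^1_z$ at infinity is slightly off (that lemma concerns the fiber over $\infty_y$), but the fact itself is immediate from the Artin--Schreier description or from your valuation computation $v_P(z)=-q^2$.
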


\begin{proof}
The degree $q^2$ cover $\mathcal{X}_n\rightarrow {\mathbb P}^1_z$ has affine equation $y^{q^2}-y=z^m$.
The valuation of $z^m$ at the point of infinity on ${\mathbb P}^1_z$ is $-m$.
Since $m$ is prime-to-$p$ and $y^{q^2}-y$ is a separable additive polynomial with all its roots in ${\mathbb F}_{q^2}$, 
the result again follows from \cite[Prop.\ 3.7.10]{Stich93}.
\end{proof}

\begin{remark}
Here is an alternative way to prove Lemmas \ref{jumpcntoxn} and \ref{ext2}.
One can show that there is one break in the ramification filtration of $\mathcal{H}_q\rightarrow\mathbb{P}^1_y$ 
(resp.\ $\mathbb{P}^1_y \to \mathbb{P}^1_t$) 
and it occurs at index $q+1$ (resp.\ 1) in the lower numbering.
Pulling these covers back by a cover which has tame ramification at $\infty_y$ multiples the lower jump by the 
ramification index, which is $m$. 
\end{remark}

\begin{proposition}\label{jumpcntop1z}
There are two jumps in the ramification filtration of $\mathcal{C}_n\rightarrow\mathbb{P}^1_z$ at $P_\infty$
 and they occur at indices $m=(q^n+1)/(q+1)$ and $q^n+1$ in the lower numbering.
\end{proposition}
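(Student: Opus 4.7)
The plan is to combine the two preceding lemmas, the compatibility of ramification filtrations with the normal subgroup $Z \lhd Q$, and the structure of $Q$ as a non-abelian $p$-group of order $q^3$ with center $Z$.

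Setup: the $Q$-Galois cover $\mathcal{C}_n \to \mathbb{P}^1_z$ decomposes through $\mathcal{X}_n = \mathcal{C}_n/Z$ (by Lemma \ref{Qinaut}). Let $Q_i$ denote the lower ramification groups at $P_\infty$. By Lemma \ref{Lramfact}(iii), $Z_i = Q_i \cap Z$, so Lemma \ref{jumpcntoxn} yields $Z \subseteq Q_i$ precisely when $i \leq q^n+1$. By Lemma \ref{Lramup}, $(Q/Z)^v = Q^v Z/Z$, so Lemma \ref{ext2} tells us the unique upper jump of $Q/Z$ occurs at $v=m$. A short computation from the multiplication formula in Section \ref{Q} identifies the commutator pairing on $Q$ with a nondegenerate alternating form $Q/Z \times Q/Z \to Z$ sending $(b,d)\mapsto b^qd-bd^q$; in particular $[Q,n]=Z$ for every $n \in Q \setminus Z$, so any nontrivial normal subgroup of $Q$ is either contained in $Z$ or contains $Z$.

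Determining the filtration shape: since $Q_{q^n+2}$ is normal in $Q$ with $Q_{q^n+2}\cap Z=\{1\}$, the structural input forces $Q_{q^n+2}=\{1\}$, so $q^n+1$ is the largest lower jump and $Q_{q^n+1}$ is a normal subgroup containing $Z$. If $Q_{q^n+1}=Q$, then $(Q/Z)$ would have its unique upper jump at $q^n+1\neq m$, contradicting Lemma \ref{ext2}. If the filtration contains an intermediate order-$q^2$ normal subgroup $H$ with $Z \subsetneq H \subsetneq Q$ (this includes the possibility $Q_{q^n+1}=H$), then a piecewise computation of the Herbrand function $\phi_Q$ combined with $(Q/Z)^v=Q^vZ/Z$ shows that $Q/Z$ acquires a second upper jump coming from the $Q/H$-level of the filtration, again contradicting Lemma \ref{ext2}. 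Therefore the filtration reduces to $Q = Q_{i_1} \supsetneq Z = Q_{i_1+1} = \cdots = Q_{q^n+1} \supsetneq \{1\} = Q_{q^n+2}$, with exactly two jumps at $i_1$ and $q^n+1$.

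Conclusion: since $\phi_Q$ is the identity before the first jump, $\phi_Q(i_1)=i_1$, and this must equal $m$ (the unique upper jump of $Q/Z$); hence $i_1=m$, yielding the two lower jumps at $m$ and $q^n+1$. The main obstacle is ruling out an order-$q^2$ intermediate step in the filtration; this is where the non-abelian structure of $Q$ must be used in tandem with Lemma \ref{Lramup}, and one must carefully track the piecewise Herbrand function to detect the spurious second upper jump that such an intermediate step would force on $Q/Z$.
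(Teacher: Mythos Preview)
Your argument is correct, though it takes a different route from the paper's. The paper argues more abstractly: since each lower jump of the $Q$-filtration must induce either a lower jump of the $Z$-subfiltration (via Lemma~\ref{Lramfact}(iii)) or an upper jump of the $Q/Z$-quotient filtration (via Lemma~\ref{Lramup}), and each of those filtrations has exactly one jump, the $Q$-filtration has at most two jumps; since the identified upper jump $m$ and lower jump $q^n+1$ are distinct, there are exactly two, and the first lower jump equals the first upper jump $m$. This counting argument never touches the internal structure of $Q$.

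Your approach instead exploits that structure: the nondegenerate commutator pairing forces every nontrivial normal subgroup of $Q$ either to contain $Z$ or to lie in $Z$, which immediately kills $Q_{q^n+2}$ and pins each $Q_i$ (for $i\le q^n+1$) between $Z$ and $Q$. You then rule out intermediate steps by observing they would create extra upper jumps for $Q/Z$. This is a perfectly valid alternative; it is more hands-on and makes the filtration shape transparent, at the cost of a group-theoretic computation the paper avoids. One small imprecision: you speak of an ``order-$q^2$'' intermediate subgroup $H$, but for $q=p^h$ with $h>1$ an intermediate $Q_i$ could have any order $q\cdot p^j$ with $0<j<2h$. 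Your Herbrand argument goes through unchanged for any such $H$, so this does not affect correctness, but you should state it for arbitrary $H$ with $Z\subsetneq H\subsetneq Q$.
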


\begin{proof}
The upper jump of the cover $\mathcal{X}_n \to \mathbb{P}^1_z$ is $\phi(m)= \sum_{i=1}^{m}1 =m$.
Thus $m$ is an upper jump for $\mathcal{C}_n \to \mathbb{P}_z^1$ by Lemma \ref{Lramup}.
By Lemma \ref{Lramfact}(iii), the lower jump $q^n+1$ for $\mathcal{C}_n \to \mathcal{X}_n$ 
is also a lower jump for $\mathcal{C}_n \to \mathbb{P}_z^1$. 

Since $m \not = q^n+1$, there are exactly two jumps in the ramification filtration of $\mathcal{C}_n \to \mathbb{P}_z^1$
by Lemmas \ref{Lramfact}(iii) and \ref{Lramup}.
Because the ramification index is a power of $p$, the smaller jump is the same in the upper and lower numbering, 
and thus equals $m$, completing the proof.
\end{proof}

\subsection{The inertia group of $\mathcal{C}_n$} \label{Siner}

Let $I_{\mathcal{C}_n}$ be the inertia group at $P_{\infty}$ for the cover $\mathcal{C}_n \to \mathcal{C}_n/\text{Aut}(\mathcal{C}_n)$.
Let $S$ be the Sylow-$p$ subgroup of $I_{\mathcal{C}_n}$. 

\begin{proposition}\label{centerS}
The group $Z$ is in the center of $S$.
\end{proposition}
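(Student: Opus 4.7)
The plan is to apply the commutator inequality from higher ramification theory (Lemma \ref{Lramfact}(v)) at $P_\infty$ and exploit the ramification data already computed for the sub-covers $\mathcal{C}_n \to \mathcal{X}_n$ and $\mathcal{C}_n \to \mathbb{P}^1_z$. Let $\{G_i\}_{i\geq 0}$ denote the lower ramification filtration of the Galois cover $\mathcal{C}_n \to \mathcal{C}_n/\mathrm{Aut}(\mathcal{C}_n)$ at $P_\infty$, so $G_0 = I_{\mathcal{C}_n}$ and $G_1 = S$.

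First, since $Q$ fixes $P_\infty$ (Lemma \ref{cninfinity}), one has $Z \subseteq Q \subseteq I_{\mathcal{C}_n}$; because $Z$ is a $p$-group, $Z \subseteq G_1 = S$. Applying Lemma \ref{Lramfact}(iii) to the subgroup $Z$, the ramification filtration at $P_\infty$ of the $Z$-cover $\mathcal{C}_n \to \mathcal{X}_n$ is $Z \cap G_i$; by Lemma \ref{jumpcntoxn} this equals $Z$ for $0 \leq i \leq q^n+1$ and $\{e\}$ for $i > q^n+1$. Hence $Z \subseteq G_{q^n+1}$ and $Z \cap G_{q^n+2} = \{e\}$. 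The analogous application to $Q \leq \mathrm{Aut}(\mathcal{C}_n)$ combined with Proposition \ref{jumpcntop1z} yields $Q \cap G_{q^n+2} = \{e\}$.

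Next, for $z \in Z \subseteq G_{q^n+1}$ and $s \in S = G_1$, Lemma \ref{Lramfact}(v) gives $[z,s] \in G_{q^n+3}$. To finish, I would show $[z,s] \in Z$, from which $[z,s] \in Z \cap G_{q^n+3} \subseteq Z \cap G_{q^n+2} = \{e\}$, yielding $[z,s] = e$. The route is to establish $Z \triangleleft S$: then $sz^{-1}s^{-1} \in Z$ and $[z,s] = z \cdot (sz^{-1}s^{-1}) \in Z$. Since $Z$ is characteristic in $Q$ (being both its center and its commutator subgroup), it suffices to show $Q \triangleleft S$. My approach would be to identify $Q$ with the ramification subgroup $G_m$ where $m = (q^n+1)/(q+1)$: the inclusion $Q \subseteq G_m$ is immediate from Lemma \ref{Lramfact}(iii) and Proposition \ref{jumpcntop1z}, and ramification groups are automatically normal in $I_{\mathcal{C}_n}$ by Lemma \ref{Lramfact}(i). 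The reverse inclusion $G_m \subseteq Q$ would come from analyzing the elementary abelian quotient $G_m/G_{m+1}$, constraining it via Lemma \ref{Lramup} applied to the $Q/Z$-sub-cover $\mathcal{X}_n \to \mathbb{P}^1_z$ (whose lower jump at $m$ is given by Lemma \ref{ext2}), together with the equalities $G_m \cap Q = Q$ and $G_{m+1} \cap Q = Z$.

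The principal obstacle is pinning down the normality $Q \triangleleft S$: a priori the Sylow $p$-subgroup $S$ of $I_{\mathcal{C}_n}$ could strictly contain $Q$, and the full-cover ramification filtration could have jumps beyond those predicted by the $Q$-subcover. Showing that no ``extra'' ramification elements appear at level $m$ (equivalently, that $G_m = Q$, or at least that whatever extra elements appear must centralize $Z$) is the technical crux that gives Proposition \ref{centerS} its content rather than making it a triviality.
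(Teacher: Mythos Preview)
Your outline correctly identifies the commutator inequality Lemma~\ref{Lramfact}(v) as the engine, and the reductions $Z\subseteq G_{q^n+1}$ and $Q\cap G_{q^n+2}=\{e\}$ via Lemma~\ref{Lramfact}(iii) are fine. The genuine gap is exactly where you flag it: the step $G_m=Q$ (equivalently $Q\triangleleft S$) is not established, and your proposed tool cannot establish it. Lemma~\ref{Lramup} compares the filtration of a cover with Galois group $G$ to that of the quotient cover by a subgroup $H$ \emph{normal in $G$}. The cover $\mathcal{X}_n\to\mathbb{P}^1_z$ is the quotient of the $Q$-cover $\mathcal{C}_n\to\mathbb{P}^1_z$ by $Z\triangleleft Q$, so Lemma~\ref{Lramup} applied there only reproduces information internal to $Q$; it says nothing about $G_m$ for the full $S$-cover unless you already know $Z$ (or $Q$) is normal in $S$, which is precisely what is at stake. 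Knowing $G_m\cap Q=Q$ and $G_{m+1}\cap Q=Z$ does not bound $|G_m/G_{m+1}|$ from above. So as written the argument is circular.

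The paper closes this gap by a different route that avoids ever proving $G_m=Q$ in advance. It picks $s\in S\setminus Q$ of \emph{maximal} lower jump $J$ in the $S$-cover and shows $J\le q^n+1$; this forces $G_{q^n+2}=\{e\}$ for the $S$-filtration, after which your commutator step $[z,s]\in G_{q^n+3}=\{e\}$ finishes immediately (no need for $[z,s]\in Z$). The bound $J\le q^n+1$ is obtained by a short case analysis on $|s|$: since $s^p$ has strictly larger jump than $s$, maximality forces $s^p\in Q$; then if $|s|=p$ and $J>q^n+1$, $s$ would already lie in the last nontrivial ramification group, hence commute with $Z$ and descend to an automorphism of $\mathcal{X}_n=\mathcal{C}_n/Z$ outside $Q/Z$, contradicting Proposition~\ref{autxn}. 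The remaining cases $|s|=p^2$ (and $|s|=8$ when $p=2$) are handled by comparing the jump of $s$ with that of $s^p\in Q$. The crucial external input you are missing is Proposition~\ref{autxn}: knowledge of $\mathrm{Aut}(\mathcal{X}_n)$ is what rules out extra $p$-elements with deep jumps, and no amount of manipulation with Lemmas~\ref{Lramfact}(iii) and~\ref{Lramup} alone will substitute for it.
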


\begin{proof}
Let $\mathcal{W}=\mathcal{C}_n/S$.  
Consider the cover $\mathcal{C}_n \to \mathcal{W}$ which is totally ramified at $P_\infty$.  
Let $s$ be an element of $S-Q$ with maximal lower jump $J$ in this cover.   
By Lemma \ref{Lramfact}(iv), $s$ and $s^p$ have different lower jumps.  
The jump for $s^p$ must be greater than that for $s$, so $s^p\in Q$.  
That means that $|s|=p$, or $|s|=p^2$, or $p=2$ and $|s|=8$.  
In all three cases, it suffices to show that $J \leq q^n+1$,
because this shows that $Z$ is contained in the last non-trivial ramification group 
and thus $Z$ is in the center of $S$ by Lemma \ref{Lramfact}(v).

{\bf Case 1:} $|s|=p$.  Assume that $J> q^n+1$.  
Then, by hypothesis, $s$ is contained in the last non-trivial ramification group, 
and thus commutes with $Z$ by Lemma \ref{Lramfact}(v).
Therefore $s$ descends to an automorphism $\overline{s}$ in the inertia group $I_{\mathcal{X}_n}$ at $\infty_{\mathcal{X}_n}$.
This gives a contradiction since the Sylow $p$-subgroup of $I_{\mathcal{X}_n}$ is $Q/Z$ by Proposition \ref{autxn} 
and $s \notin Q$.  Thus $J\leq q^n+1$. 

{\bf Case 2:} $|s|=p^2$. Then $\langle s \rangle \cap Q = \langle s^{p} \rangle$, where $ \langle s^{p} \rangle\cong \mathbb{Z}/p$.  
That means that in the ramification filtration of $\mathcal{C}_n \to \mathbb{P}^1_z$, 
the lower jump of $s^p$ is $q^n+1$ or $m$.  
Since $Q \subset S$, Lemma \ref{Lramfact}(iii) implies that the lower jumps of elements of $Q$ will the same in the cover 
$\mathcal{C}_n \to \mathcal{W}$.  Since the lower jump of $s$ is less than that of $s^{p}$, 
this implies $J < q^n+1$.

{\bf Case 3}: $p=2$ and $|s|=8$.
Then $\langle s \rangle \cap Q = \langle s^{2} \rangle$, where $\langle s^{2} \rangle\cong \mathbb{Z}/4$.
Then $s^2 \not \in Z$ so the lower jump of $s^2$ is $m$.
This implies that $J < m$. 
\end{proof}

\begin{proposition} \label{TautC}
The inertia group $I_{\mathcal{C}_n}$ of $\mathcal{C}_n$ at $P_\infty$ is $\Gamma=Q\rtimes_{\phi} \Sigma$. 
\end{proposition}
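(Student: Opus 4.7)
The plan is to bound $|I_{\mathcal{C}_n}|$ from above by $|\Gamma| = q^3(q^n+1)(q-1)$; since $\Gamma \subseteq I_{\mathcal{C}_n}$ already (by Proposition \ref{Pautsub} together with Lemma \ref{cninfinity}, which forces $\Gamma$ to fix the unique point $P_\infty$ above $\infty_y$), equality will follow. The driving idea is that $\mathcal{X}_n = \mathcal{C}_n/Z$ and that $\textrm{Aut}(\mathcal{X}_n)$ has already been pinned down in Proposition \ref{autxn}, so any subgroup of $I_{\mathcal{C}_n}$ that normalizes $Z$ is tightly controlled.

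Let $S$ denote the Sylow $p$-subgroup of $I_{\mathcal{C}_n}$; since the wild inertia at a fixed point is normal in the inertia group, $S$ is unique and contains $Q$. The first step is to show $S=Q$. By Proposition \ref{centerS}, $Z$ is central in $S$, hence normal in $S$, so $S$ acts on $\mathcal{X}_n$, and the induced map $S/Z \to \textrm{Aut}(\mathcal{X}_n)$ is injective (its kernel is exactly the deck group $Z$ of $\mathcal{C}_n \to \mathcal{X}_n$). Every element of $S$ fixes $P_\infty$, hence its image fixes $\infty_{\mathcal{X}_n}$, so the embedding lands in $I_{\mathcal{X}_n}$. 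Since $S/Z$ is a $p$-group and the Sylow $p$-subgroup of $I_{\mathcal{X}_n}$ is $Q/Z$ of order $q^2$ by Proposition \ref{autxn}, this forces $|S/Z|\leq q^2$, whence $|S|\leq q^3=|Q|$ and thus $S=Q$.

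For the prime-to-$p$ part, write $I_{\mathcal{C}_n}= Q\rtimes H$ with $H$ cyclic of order prime to $p$, as in \cite[Chap.\ IV, Cor.\ 4]{SerreLF}. Now $Q$ is normal in $I_{\mathcal{C}_n}$ and $Z$ is characteristic in $Q$ (being its center), so $Z$ is normal in all of $I_{\mathcal{C}_n}$. The same descent argument yields an embedding $I_{\mathcal{C}_n}/Z \hookrightarrow I_{\mathcal{X}_n}$, so
\[
|I_{\mathcal{C}_n}| \;=\; |Z|\cdot |I_{\mathcal{C}_n}/Z| \;\leq\; q\cdot q^2(q^n+1)(q-1) \;=\; |\Gamma|
\]
by Proposition \ref{autxn}. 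Combined with the reverse inclusion, this completes the proof. The main obstacle was overcome already in Proposition \ref{centerS}, through the delicate case analysis on an element $s\in S\setminus Q$ of smallest possible order; once centrality of $Z$ in $S$ is granted, the remaining argument is routine bookkeeping about characteristic subgroups and descent to $\mathcal{X}_n$.
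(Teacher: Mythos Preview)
Your argument is correct. The step $S=Q$ is handled exactly as in the paper: Proposition~\ref{centerS} gives $Z$ central in $S$, so $S/Z$ embeds in the Sylow $p$-subgroup $Q/Z$ of $I_{\mathcal{X}_n}$, forcing $S=Q$.

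For the prime-to-$p$ part you take a different route than the paper. The paper chooses a cyclic complement $T$ to $Q$ containing $M$, descends $T$ to the Hermitian curve $\mathcal{H}_q=\mathcal{C}_n/M$, and bounds $T/M$ via an external reference \cite[Eqn.\ 2.3]{GSX} describing the tame inertia of $\mathcal{H}_q$. You instead observe that once $S=Q$, the center $Z$ is characteristic in $Q$ and hence normal in all of $I_{\mathcal{C}_n}$, so the \emph{entire} inertia group descends to $\mathcal{X}_n$; the size bound then follows directly from Proposition~\ref{autxn} with no outside citation. Your approach is cleaner and more self-contained, and it makes the two halves of the proof symmetric (both are descents to $\mathcal{X}_n$). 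The paper's route, by contrast, explicitly identifies the complement as $\Sigma$ rather than just matching cardinalities, though of course this follows immediately from your inequality combined with $\Gamma\subseteq I_{\mathcal{C}_n}$.
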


\begin{proof} 
Suppose $s \in S$.  
By Proposition \ref{centerS}, $s$ commutes with $Z$ and so $s$ descends to an automorphism 
$\overline{s}$ of ${\mathcal X}_n$.  By Proposition \ref{autxn}, $\overline{s} \in Q/Z$ and so 
$S/Z=Q/Z$.  The Third Isomorphism Theorem implies that $S=Q$.  

Let $T$ be a subgroup of $I_{\mathcal{C}_n}$ isomorphic to the prime-to-$p$ group $I_{\mathcal{C}_n}/Q$ and containing $M$.  
Since $T$ is cyclic, the automorphisms in $T$ descend to $\mathcal{H}_q$.  
The prime-to-$p$ part of $I_{\mathcal{H}_q}$ is isomorphic to $\Sigma/M$ by \cite[Eqn.\ 2.3]{GSX}, so $T/M=\Sigma/M$ 
and the Third Isomorphism Theorem implies that $T=\Sigma$.   
Thus $I_{\mathcal{C}_n}=Q\rtimes_{\phi} \Sigma$.
\end{proof}

\subsection{The Sylow $p$-subgroup of $\text{Aut}(\mathcal{C}_n)$} \label{Ssylow}

More information can be gained by considering the orbits of ${\mathcal C}_n({\mathbb F}_{q^{2n}})$ 
under $\Gamma$.  

\begin{proposition}\label{orbits}
\begin{description}
\item{(i)}
Suppose $\eta_1=(x_1,y_1,z_1)$ and $\eta_2=(x_2,y_2,z_2)$ are two affine $\mathbb{F}_{q^{2n}}$-points of ${\mathcal C}_n$.
Then $\eta_1$ and $\eta_2$ are in the same orbit under $\Gamma$ if and only if 
$z_2=\zeta z_1$ for some $(q^n+1)(q-1)$-th root of unity $\zeta$. 
\item{(ii)} The orbits of ${\mathcal C}_n(\mathbb{F}_{q^{2n}})$ under $\Gamma$ consist of:
one orbit $O_{\infty}=\{P_{\infty}\}$ of cardinality $1$; one orbit of cardinality $q^3$ which is
$$O_{0} = {\mathcal C}_n(\mathbb{F}_{q^2})-\{P_{\infty}\}
=\{(x,y,0) \mid x,y \in {\mathbb F}_{q^2}, \ x^q+x=y^{q+1}\};$$
and $(q^{n-1}-1)/(q-1)$ orbits of cardinality $|\Gamma|$.  
\item{(iii)} The cover ${\mathcal C}_n \to {\mathcal C}_n/\Gamma$ is a cover of the projective line 
${\mathbb P}^1_s$ where $s=z^{(q^n+1)(q-1)}$.  It is ramified only above $s=0$ and $s=\infty$.
Above $s=0$, the ramification is tame of order $(q^n+1)(q-1)$.
Above $s=\infty$, it is totally ramified and the lower jumps in the 
ramification filtration are $m$ and $q^n+1$.
\end{description}
\end{proposition}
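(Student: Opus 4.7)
The overall plan is to reduce the proposition to one genuine fact: that $Q$ acts simply transitively on each affine fiber of $\mathcal{C}_n \to \mathbb{P}^1_z$ above an $\mathbb{F}_{q^{2n}}$-point. Once this is in hand, (i) is immediate from how $\Gamma$ acts on the $z$-coordinate, (ii) is a point-count using the formula for $\#\mathcal{C}_n(\mathbb{F}_{q^{2n}})$, and (iii) is an extraction of the quotient cover and its ramification from Lemma \ref{Qinaut}, Proposition \ref{TautC}, and Proposition \ref{jumpcntop1z}.

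For (i), since $Q$ fixes $z$ pointwise while $g_\zeta$ sends $z$ to $\zeta z$, any two points in a common $\Gamma$-orbit must have $z$-coordinates differing by a $(q^n+1)(q-1)$-th root of unity. Conversely, if $z_2 = \zeta z_1$, then $g_\zeta(\eta_1)$ is an $\mathbb{F}_{q^{2n}}$-point with $z$-coordinate $z_2$, so it suffices to show $Q$ is transitive on the fiber above $z_2$. The separable equation $y^{q^2}-y = z_2^m$ has $q^2$ solutions in $\overline{\mathbb{F}_p}$, and for each such $y$ the equation $x^q+x = y^{q+1}$ has $q$ solutions, so the fiber has exactly $q^3$ geometric points. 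The formula $Q_{a,b}(x,y,z) = (x + b^q y + a,\, y + b,\, z)$ shows that only $Q_{0,0}$ can fix an affine point, so $Q$ acts freely and, by cardinality, simply transitively; the unique element sending $\eta_1$ to $\eta_2$ gives the desired backward direction.

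For (ii), $P_\infty$ is fixed by $\Gamma$ as the unique preimage of $\infty_y$, yielding $O_\infty$. Setting $z=0$ in the defining equations forces $y^{q^2}=y$ and then $x \in \mathbb{F}_{q^2}$, so the $q^3$ affine points with $z=0$ are exactly $\mathcal{C}_n(\mathbb{F}_{q^2}) \setminus \{P_\infty\}$; by (i) they form a single $\Gamma$-orbit $O_0$. Any remaining $\mathbb{F}_{q^{2n}}$-orbit consists of points with $z \neq 0$, and on such a point the $\Sigma$-stabilizer is trivial (since $\zeta z = z$ forces $\zeta = 1$) while $Q$ acts freely, so the full $\Gamma$-stabilizer is trivial and the orbit has size $|\Gamma|$. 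Subtracting $1+q^3$ from $q^{2n+2}-q^{n+3}+q^{n+2}+1$ and factoring yields $q^3(q^n+1)(q^{n-1}-1)$ remaining points, which divided by $|\Gamma| = q^3(q^n+1)(q-1)$ gives precisely $(q^{n-1}-1)/(q-1)$ orbits.

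For (iii), Lemma \ref{Qinaut} gives $\mathbb{K}(\mathcal{C}_n/Q) = \mathbb{K}(z)$, and since $\Sigma$ acts by $z \mapsto \zeta z$, its invariants are $\mathbb{K}(s)$ with $s = z^{(q^n+1)(q-1)}$; hence $\mathcal{C}_n/\Gamma \cong \mathbb{P}^1_s$. The cover $\mathcal{C}_n \to \mathbb{P}^1_z$ is totally ramified at $P_\infty$ and unramified elsewhere (by the fiber count in (i)), while $\mathbb{P}^1_z \to \mathbb{P}^1_s$ is tame of index $(q^n+1)(q-1)$ exactly over $s=0$ and $s=\infty$. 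Combining, $\mathcal{C}_n \to \mathbb{P}^1_s$ is ramified only above $s=0$ (tame of index $(q^n+1)(q-1)$) and above $s=\infty$ (totally ramified with inertia $\Gamma$ by Proposition \ref{TautC}); since the wild inertia is $Q \trianglelefteq \Gamma$, Lemma \ref{Lramfact}(iii) identifies the wild jumps of the $\Gamma$-filtration at $P_\infty$ with those of the $Q$-filtration for $\mathcal{C}_n \to \mathbb{P}^1_z$, which by Proposition \ref{jumpcntop1z} are $m$ and $q^n+1$. I do not anticipate a serious obstacle here; the only non-bookkeeping step is the simply-transitive $Q$-action on fibers, which falls out of the explicit formula for $Q_{a,b}$.
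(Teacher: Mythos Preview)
Your proof is correct and follows essentially the same approach as the paper's: reduce to the simply-transitive action of $Q$ on affine fibers of $\mathcal{C}_n \to \mathbb{P}^1_z$, then read off the $\Sigma$-action on the $z$-coordinate. The paper phrases the $Q$-transitivity as ``the cover is unramified away from $\infty_z$'' while you verify freeness directly from the formula for $Q_{a,b}$ and match cardinalities; for (iii) you supply more detail (via Lemma \ref{Lramfact}(iii)) than the paper's one-line appeal to (ii) and Proposition \ref{jumpcntop1z}, but the content is the same.
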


\begin{proof}
Consider the action of $\Gamma$ on the  
set $\Omega$ of affine $\mathbb{F}_{q^{2n}}$-points of ${\mathcal C}_n$.
To begin, consider the action of $Q \subset \Gamma$ on $\Omega$.  
Suppose $\eta_1 = (x_1,y_1,z_1) \in \Omega$.
The orbit $O'_{\eta_1}$ of $\eta_1$ under $Q$ is the intersection of $\Omega$ with the fiber of the $Q$-Galois cover 
${\mathcal C}_n \to {\mathbb P}^1_z$ above $z_1$.
The orbit $O'_{\eta_1}$ has cardinality $q^3$ because ${\mathcal C}_n \to {\mathbb P}^1_z$ is unramified away from $\infty_z$.   
Two points $\eta_1, \eta_2 \in \Omega$ are in the same orbit under $Q$ if and only if $z_1=z_2$.

Each orbit of $\Gamma$ on $\Omega$ is a union of some of the orbits of $Q$ on $\Omega$.
The problem of studying the orbit of $\eta_1$ under $\Gamma$ 
thus reduces to studying the action of $G$ on the variable $z$.
Note that $z_2=g(z_1)$ for some $g \in G$ if and only if $z_2=\zeta z_1$ 
for some $(q^n+1)(q-1)$th root of unity $\zeta$. 
Thus $\eta_1, \eta_2 \in \Omega$ are in the same orbit of $\Gamma$ on $\Omega$ if and only if 
$z_2=\zeta z_1$ 
for some $(q^n+1)(q-1)$-th root of unity $\zeta$. 
This proves part (i).

For part (ii), by definition, the action of $\Gamma$ fixes $P_{\infty}$.
The affine points in ${\mathcal C}_n({\mathbb F}_{q^2})$ are exactly the points
$\eta=(x,y,0)$ with $x, y \in {\mathbb F}_{q^2}$ and $x^q+x=y^{q+1}$.
By part (i), this yields an orbit of cardinality $q^3$.

Finally, if $\eta=(x,y,z) \in \Omega$ with $z \not = 0$, then the orbit of $\eta$ consists of 
$|\Gamma|$ points.  The number of orbits of $\Gamma$ on $\Omega-{\mathcal C}_n({\mathbb F}_{q^2})$
is thus 
$$(\#\Omega-q^3)/|\Gamma|=(q^{2n+2}-q^{n+3}+q^{n+2}-q^3)/|\Gamma|=(q^{n-1}-1)/(q-1).$$ 

Part (iii) is immediate from part (ii) and Proposition \ref{jumpcntop1z}.
\end{proof}

\begin{proposition} \label{Csyl}
The Sylow-$p$ subgroups of $\textrm{Aut}(\mathcal{C}_n)$ are isomorphic to $Q$.
\end{proposition}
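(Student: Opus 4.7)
The plan is to show that $Q$ itself is a Sylow $p$-subgroup of $\textrm{Aut}(\mathcal{C}_n)$; the stated conclusion then follows since all Sylow $p$-subgroups are conjugate. The key geometric input is that $P_\infty$ is the unique point of $\mathcal{C}_n$ fixed by \emph{every} element of $Q$. This follows from Lemma \ref{Qinaut}: the $Q$-Galois cover $\mathcal{C}_n \to \mathcal{C}_n/Q = \mathbb{P}^1_z$ factors as $\mathcal{C}_n \to \mathcal{X}_n \to \mathbb{P}^1_z$, and each step is given by a separable additive Artin--Schreier-type equation whose right-hand side is a polynomial in the base coordinate. Hence each step is unramified over the affine line and totally ramified at infinity, so the composition is unramified outside $P_\infty$ and totally ramified there with inertia $Q$. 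Since the common fixed points of $Q$ are precisely the totally ramified points of this cover, we obtain $\mathrm{Fix}(Q) = \{P_\infty\}$.

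Given this, the rest is a short normalizer argument. Let $S$ be a Sylow $p$-subgroup of $\textrm{Aut}(\mathcal{C}_n)$ containing $Q$ (possible after conjugating). Suppose for contradiction that $Q \subsetneq S$. In any finite $p$-group, a proper subgroup has strictly larger normalizer, so $N_S(Q) \supsetneq Q$; pick $s \in N_S(Q) \setminus Q$. Because $s$ normalizes $Q$, it permutes the common fixed-point set $\mathrm{Fix}(Q) = \{P_\infty\}$, so $s$ fixes $P_\infty$. Consequently $s$ lies in the inertia group $I_{\mathcal{C}_n} = \Gamma = Q \rtimes_\phi \Sigma$ by Proposition \ref{TautC}. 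Since $|\Sigma| = (q^n+1)(q-1)$ is coprime to $p$, the unique Sylow $p$-subgroup of $\Gamma$ is $Q$ itself, forcing $s \in Q$, a contradiction. Hence $S = Q$.

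The proof thus reduces to two steps: a geometric identification of $\mathrm{Fix}(Q)$ via the ramification structure of $\mathcal{C}_n \to \mathbb{P}^1_z$, followed by an application of Proposition \ref{TautC} and the standard $p$-group normalizer fact. Neither step is a real obstacle; the only point worth stating carefully is that neither Artin--Schreier step introduces ramification over the affine $z$-line, which is immediate from the shape of the defining equations.
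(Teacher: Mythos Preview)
Your argument is correct and is essentially the paper's own proof: the paper invokes Proposition~\ref{orbits}(ii) for the statement $\mathrm{Fix}(Q)=\{P_\infty\}$ (you instead re-derive it from the Artin--Schreier ramification of $\mathcal{C}_n\to\mathbb{P}^1_z$, which is the same content), and then uses exactly your normalizer argument---$N_A(Q)$ fixes $P_\infty$, hence lies in $I_{\mathcal{C}_n}=\Gamma$ by Proposition~\ref{TautC}, so $p\nmid[N_A(Q):Q]$, contradicting $N_S(Q)\supsetneq Q$ for a putative larger Sylow $S$.
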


\begin{proof}
By Proposition \ref{orbits}(ii), $Q$ has exactly one fixed point $P_\infty$ on ${\mathcal C}_n$ and all other orbits have size $q^3$.  
The normalizer $N_A(Q)$ fixes the unique fixed point of $Q$ and so is contained in the inertia group of $P_\infty$.
If $Q$ were properly contained in a Sylow $p$-subgroup $P$,
then $N_P(Q)$ would properly contain $Q$.
However, $p$ does not divide $[N_A(Q):Q]$ by Proposition \ref{TautC}.
This would give a contradiction and so $Q$ is a Sylow $p$-subgroup of ${\text Aut}(C_n)$.
\end{proof}

\subsection{Lifting automorphisms of the Hermitian curve}

In this section, we show that there are automorphisms of $\mathcal{H}_q$ 
which do not lift to automorphisms of $\mathcal{C}_n$ when $n > 3$. 
The strategy is to consider a specific involution $\omega \in {\rm Aut}(\mathcal{H}_q)$.
If $\omega$ lifts to an automorphism $\tilde{\omega}$ of $\mathcal{C}_n$, then 
$\tilde{\omega}$ exchanges $P_\infty$ with another point;
using valuation theory, we show this implies $n=3$.

Let $P_0 \in {\mathcal C}_n$ be the point $(x,y,z)=(0,0,0)$. 
Let $v_0$ (resp.\ $v_\infty$) denote the valuation of $\mathcal{C}_n$ at $P_0$ (resp.\ $P_\infty$).
Let $t=z^{q^{n-3}}/x$.

\begin{lemma} \label{Lvaluation}
\begin{description}
\item{(i)} At the point $P_0$, the functions $x,y,z$ have the following valuations:
$$v_0(y)=m, \ v_0(x)=q^n+1, \ v_0(z)=1.$$
\item{(ii)} At the point $P_\infty$, the functions $x,y,z,t$ have the following valuations:
$$v_\infty(y)=-qm, \ v_\infty(x)=-(q^n+1), \ v_\infty(z)=-q^3, \ v_\infty(t)=1.$$
\end{description}
\end{lemma}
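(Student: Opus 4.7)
The proof splits into two independent computations, one at $P_\infty$ and one at $P_0$; in each case every valuation is read off directly from the defining equations of $\mathcal{H}_q$ and $\mathcal{X}_n$ once the ramification index of the relevant subcover is known. The plan is to record those indices first and then turn the crank.

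For $P_\infty$, the proof of Lemma \ref{cninfinity} already establishes that $\mathcal{C}_n \to \mathbb{P}^1_y$ is totally ramified above $\infty_y$ with ramification index $qm$. Since $1/y$ is a uniformizer at $\infty_y$ on $\mathbb{P}^1_y$, this gives $v_\infty(y) = -qm$. The Hermitian equation $x^q + x = y^{q+1}$ forces $v_\infty(x^q+x) = (q+1)v_\infty(y) = -q(q^n+1)$, and since $x$ has a pole at $P_\infty$ the strict triangle inequality yields $v_\infty(x^q+x) = qv_\infty(x)$, so $v_\infty(x) = -(q^n+1)$. Similarly, from $z^m = y^{q^2}-y$ and $v_\infty(y) < 0$ we obtain $mv_\infty(z) = q^2v_\infty(y) = -q^3 m$, whence $v_\infty(z) = -q^3$. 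The valuation of $t = z^{q^{n-3}}/x$ is then $q^{n-3}v_\infty(z) - v_\infty(x) = -q^n + (q^n+1) = 1$.

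For $P_0$, the key observation is that the cover $\mathcal{X}_n \to \mathbb{P}^1_z$ given by $y^{q^2}-y-z^m = 0$ is étale above every finite point of $\mathbb{P}^1_z$, because the $y$-derivative of the defining polynomial is the unit $-1$. Hence $Q_0 = (0,0) \in \mathcal{X}_n$ lies unramified above $z=0$ and $v_{Q_0}(z) = 1$. The defining equation then gives $m = v_{Q_0}(z^m) = v_{Q_0}\bigl(y(y^{q^2-1}-1)\bigr) = v_{Q_0}(y)$, since $y^{q^2-1}-1$ is a unit at $Q_0$. The further cover $\mathcal{C}_n \to \mathcal{X}_n$ is also unramified above $Q_0$: the fibre consists of the $q$ distinct $\mathbb{F}_{q^2}$-solutions of $x^q+x=0$. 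Therefore $P_0$ lies over $Q_0$ with ramification index $1$, giving $v_{P_0}(z) = 1$ and $v_{P_0}(y) = m$. Finally, since $v_{P_0}(x) > 0$, the Hermitian equation yields $v_{P_0}(x) = v_{P_0}(x^q+x) = v_{P_0}(y^{q+1}) = (q+1)m = q^n+1$.

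I do not anticipate any serious obstacle. The entire argument is routine manipulation of valuations in Artin--Schreier and Kummer-type extensions; the only non-computational ingredients are the total-ramification statement at $P_\infty$, already contained in Lemma \ref{cninfinity}, and the étaleness statements at $P_0$, which follow from the derivative criterion and the separability of the polynomial $x^q+x$.
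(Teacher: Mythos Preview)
Your proof is correct and follows essentially the same approach as the paper: read off one valuation from a known ramification index, then propagate through the defining equations of $\mathcal{H}_q$ and $\mathcal{X}_n$ using the ultrametric inequality. The only cosmetic difference is the entry point at $P_0$: the paper starts from the ramification index $m$ of $\mathcal{C}_n \to \mathbb{P}^1_y$ above $y=0$ to get $v_0(y)=m$ first, whereas you start from the \'etaleness of $\mathcal{X}_n \to \mathbb{P}^1_z$ above $z=0$ to get $v_0(z)=1$ first; both routes are equally short and arrive at the same three numbers.
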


\begin{proof}
\begin{description}
\item{(i)} The functions $x,y,z$ all equal $0$ at $P_0$, so their valuations at $P_0$ are all positive.  
The ramification index of $P_0$ above the point $y=0$ in ${\mathbb P}^1_y$ equals $m$, 
and so $v_0(y)=m$.  The relation $x^q+x=y^{q+1}$ shows that $v_0(x)=m(q+1)=q^n+1$.  
The relation $z^m=y^{q^2}-y$ shows that $v_0(z)=1$.
\item{(ii)} The functions $x,y,z$ all have poles at $P_\infty$ so their valuations at $P_\infty$ are all negative.
The ramification index of $P_\infty$ above the point $\infty_y$ in ${\mathbb P}^1_y$ equals $qm$, 
and so $v_\infty(y)=-qm$.  
The relation $x^q+x=y^{q+1}$ shows that $v_\infty(x)=q^n+1$.  
The relation $z^m=y^{q^2}-y$ shows that $v_\infty(z)=-q^3$.
The valuation of $t$ at $P_\infty$ is $-q^3q^{n-3}+(q^n+1)$ and so $t$ is a uniformizer at $P_\infty$.
\end{description}
\end{proof}

Consider the automorphism $\omega$ of $\mathcal{H}_q$ given by $\omega(x)=1/x$ and $\omega(y)=y/x$.

\begin{proposition} \label{Pliftw}
If $n >3$, then the automorphism $\omega$ of $\mathcal{H}_q$ does not lift to an automorphism of $\mathcal{C}_n$.
\end{proposition}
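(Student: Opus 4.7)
The plan is to assume for contradiction that $\omega$ lifts to some $\tilde\omega \in \textrm{Aut}(\mathcal{C}_n)$ and derive a divisor-theoretic obstruction. Since $\omega$ interchanges the point at infinity of $\mathcal{H}_q$ with the affine point $(0,0)$, and since the cover $\pi\colon\mathcal{C}_n\to\mathcal{H}_q$ (given by $z^m = y^{q^2}-y$) is totally ramified of index $m$ above both of these points, with unique preimages $P_\infty$ and $P_0$ respectively, any such lift must interchange $P_\infty$ with $P_0$. Setting $z' := \tilde\omega^*(z)$ and using $\tilde\omega^*(x) = 1/x$ and $\tilde\omega^*(y) = y/x$ yields
\[
(z')^m \;=\; (y/x)^{q^2} - y/x \;\in\; \mathbb{K}(\mathcal{H}_q)^*.
\]
Because $\gcd(m,p)=1$, Kummer theory applied to $\mathbb{K}(\mathcal{C}_n)=\mathbb{K}(\mathcal{H}_q)[z]/(z^m-(y^{q^2}-y))$ shows that such a $z'$ exists in $\mathbb{K}(\mathcal{C}_n)^*$ if and only if
\[
(y/x)^{q^2}-y/x \;=\; (y^{q^2}-y)^k\, g^m
\]
for some integer $k$ and some $g \in \mathbb{K}(\mathcal{H}_q)^*$.

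The next step is to compare the divisors of both sides on $\mathcal{H}_q$. Using Lemma~\ref{Lvaluation} and a point-counting argument based on $|\mathcal{H}_q(\mathbb{F}_{q^2})|=q^3+1$, I would show that for every $\alpha\in\mathbb{F}_{q^2}^*$ the $q$ points in the fiber of $u:=y/x$ above $\alpha$ are all $\mathbb{F}_{q^2}$-rational; consequently the divisors $\sum_{\alpha\in\mathbb{F}_{q^2}^*}u^{-1}(\alpha)$ and $\sum_{a\in\mathbb{F}_{q^2}^*}y^{-1}(a)$ on $\mathcal{H}_q$ coincide, both being the sum of the $q^3-q$ affine $\mathbb{F}_{q^2}$-rational points with $xy\neq 0$. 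After tracking the remaining support, this cancellation yields the divisor identity
\[
\bigl((y/x)^{q^2}-y/x\bigr) \;-\; (y^{q^2}-y) \;=\; (q^3+1)\bigl(\infty_{\mathcal{H}_q} - (0,0)\bigr)
\]
on $\mathcal{H}_q$.

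To conclude, I would extract the numerical obstruction from this identity. Evaluating the Kummer relation at any one of the $q-1$ common simple zeros (the points of $\mathcal{H}_q$ with $y=0$ and $x\neq 0$) forces $k \equiv 1 \pmod{m}$. With $k=1$ the displayed identity then requires $(q^3+1)\bigl(\infty_{\mathcal{H}_q}-(0,0)\bigr)$ to equal $m\,(g)$ for some $g\in\mathbb{K}(\mathcal{H}_q)^*$; comparing coefficients at either special point gives the divisibility $m \mid q^3+1$. Finally, a direct estimate shows that for $n \geq 5$ one has $m = (q^n+1)/(q+1) > q^3+1$, so the divisibility fails and no lift exists. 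The hard part will be establishing the divisor identity via the point-counting cancellation; once that is in place, the numerical contradiction is a routine valuation comparison.
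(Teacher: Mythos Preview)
Your argument is correct and takes a genuinely different route from the paper's. The paper works entirely on $\mathcal{C}_n$: it writes $\tilde\omega(z)=t f$ with $t=z^{q^{n-3}}/x$ the explicit uniformizer at $P_\infty$, uses that $\tilde\omega^2\in M$ so $\tilde\omega^2(z)=\zeta z$, expands $\tilde\omega^2(z)$ as $x^{-(q^{n-3}-1)}z^{q^{2n-6}}f(P)^{q^{n-3}}f(\omega(P))$, and then compares valuations at $P_0$ and $P_\infty$ to force $2q^{n-3}(q^{n-3}-1)=0$, hence $n=3$. Your approach instead descends the obstruction to $\mathcal{H}_q$ via Kummer theory: the existence of $z'\in\mathbb{K}(\mathcal{C}_n)$ with $(z')^m=(y/x)^{q^2}-y/x$ forces $(y/x)^{q^2}-y/x=(y^{q^2}-y)^k g^m$ in $\mathbb{K}(\mathcal{H}_q)^*$; a divisor comparison on $\mathcal{H}_q$ then yields $m\mid q^3+1$, which fails for $n\ge 5$.

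What each buys: your Kummer/divisor argument is cleaner conceptually and avoids the explicit uniformizer and the somewhat delicate bookkeeping with $f(P)$ versus $f(\omega(P))$; it also makes transparent \emph{why} $n=3$ is special, namely $m=q^2-q+1\mid q^3+1$ exactly then. The paper's computation, on the other hand, is self-contained on $\mathcal{C}_n$ and does not need the Kummer classification of $m$-th powers. One small simplification for your write-up: the claim that each fiber $u^{-1}(\alpha)$, $\alpha\in\mathbb{F}_{q^2}^*$, consists of $\mathbb{F}_{q^2}$-rational points follows immediately from $u=\omega^*(y)$ and the fact that $\omega$ is an $\mathbb{F}_{q^2}$-automorphism permuting $\mathcal{H}_q(\mathbb{F}_{q^2})$; the point-count is then unnecessary, though it also works.
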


\begin{proof}
Suppose that $\omega$ lifts to an automorphism $\tilde{\omega}$ of $\mathcal{C}_n$.  
Since $\omega(x)=1/x$, it follows that $\tilde{\omega}(P_\infty)=P_0$.

Applying $\omega$ to the equation $z^m=y^{q^2}-y$ implies that $\tilde{\omega}(z)^m=(\frac{y}{x})^{q^2}-\frac{y}{x}$.
By Lemma \ref{Lvaluation}, $v_\infty(\tilde{\omega}(z))=1$.
Thus $\tilde{\omega}(z)=t f$ for some function $f \in {\mathcal O}(C)$ which has no pole or zero at $P_\infty$.
Letting $P=(x,y,z)$, this means that $v_\infty(f(P))=0$.

Now $\omega^2={\rm id} \in {\rm Aut}(\mathcal{H}_q)$ so $\tilde{\omega}^2 \in M$.
Since $\tilde{\omega}^2 \in M$, this implies that $\tilde{\omega}^2(z)=\zeta z$ for some $m$th root of unity $\zeta$.
Thus $v_\infty(\tilde{\omega}^2(z))=-q^3$ and $v_0(\tilde{\omega}^2(z))=1$ by Lemma \ref{Lvaluation}.

\paragraph{Claim: The function $f$ has a zero of order $(q^{n-3}-1)(q^3+1)$ at $P_0$.}

To prove the claim, a computation shows that:
\begin{equation} \label{Eformula}
\tilde{\omega}^2(z)=\tilde{\omega}(t f)=x^{-(q^{n-3}-1)} z^{q^{2n-6}} f(P)^{q^{n-3}} f(\omega(P)).
\end{equation}
Taking the valuation of both sides of Equation \ref{Eformula} at $P_\infty$ yields that:
$$-q^3=v_\infty(\tilde{\omega}^2(z))=-q^n+q^{n-3}-1+v_\infty(f(\omega(P))).$$
Now there is an isomorphism of local rings 
$\tilde{\omega}^*: {\mathcal O}_{\mathcal{C}_n, \omega(P)} \to  {\mathcal O}_{\mathcal{C}_n, P}$.
In particular, this implies that $v_0(f(P))=v_\infty(f(\omega(P))=(q^{n-3}-1)(q^3+1)$, which completes the proof of the claim.

Now taking the valuation of both sides of Equation \ref{Eformula} at $P_0$ yields that:
$$1=v_0(\tilde{\omega}^2(z))=q^{2n-6}-(q^n+1)(q^{n-3}-1)+q^{n-3}v_0(f(P))+v_0(f(\tilde{\omega}(P))).$$
Now $v_0(f(\tilde{\omega}(P))=v_\infty(f(P))=0$.
Substituting for $v_0(f(P))$ and simplifying yields that $0=2q^{n-3}(q^{n-3}-1)$, which implies that $n=3$.
\end{proof}

\subsection{A structural result on automorphism groups of curves} \label{Sbob}

The purpose of this section and the next is to prove that the subgroup $M$ is normal in the full automorphism group 
of ${\mathcal C}_n$.
We start with some group theoretic preliminaries, refering the reader
to \cite{aschbook} for definitions and terminology.  Fix a prime $p$.
A {\it $p'$-group} is a group of order prime-to-$p$.   If $\pi$ is a set of primes, then
$O_{\pi}(J)$ is the unique maximal normal subgroup of $J$ that is a $\pi$-group
(i.e. has order divisible only by primes in $\pi$).   A group $J$ is {\it almost simple}
if it has a unique minimal normal subgroup $S$ with $S$ nonabelian simple;
we say that $S$ is the {\it socle} of $J$.
Thus,  $S \subset J \subset \textrm{Aut}(S)$.
For completeness, we include a proof of the following well-known result, e.g., \cite[Thm.\ 6.21]{Isaacs}.

\begin{lemma} \label{Lbob}  Let $Q$ be a $p$-group which is not cyclic or
generalized quaternion (for $p=2$).
If $Q$ acts on a $p'$-group $R$, then $R =\langle C_R(h) | 1 \ne h \in Q \rangle$.
\end{lemma}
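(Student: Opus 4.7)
The plan is to reduce the lemma to the case $Q = E \cong (\mathbb{Z}/p)^2$ and then apply coprime-action techniques.

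I would begin with the reduction. A classical theorem of Burnside asserts that a $p$-group has a unique subgroup of order $p$ if and only if it is cyclic or (for $p = 2$) generalized quaternion. The hypothesis on $Q$ therefore guarantees at least two distinct subgroups of order $p$ in $Q$. Since $Z(Q)$ is a nontrivial $p$-group, it contains an element $z$ of order $p$; I would pick a second subgroup $A \le Q$ of order $p$ with $A \ne \langle z \rangle$, so that $A \cap \langle z\rangle = 1$, and the centrality of $z$ forces $E := \langle A, z\rangle = A \times \langle z \rangle \cong (\mathbb{Z}/p)^2$. Since every non-identity element of $E$ lies in $Q$, it suffices to prove the conclusion for $E$ in place of $Q$.

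For the rank-$2$ case, I would set $R_0 = \langle C_R(h) \mid 1 \ne h \in E\rangle$, an $E$-invariant subgroup of $R$, and proceed by induction on $|R|$. The inductive step would use a minimal $E$-invariant normal subgroup $L \trianglelefteq R$; since any characteristic subgroup of $L$ is both $R$-normal and $E$-invariant, $L$ is characteristically simple and thus either an elementary abelian $\ell$-group for some $\ell \ne p$, or a direct product of isomorphic non-abelian simple groups. In the abelian case, Maschke's theorem (applicable since $p \nmid |L|$) decomposes $L$ over $\bar{\mathbb{F}}_\ell$ into $E$-isotypic components indexed by characters $\chi \colon E \to \bar{\mathbb{F}}_\ell^\times$; each nontrivial $\chi$ has $\ker(\chi)$ of order $p$ because $E/\ker\chi$ must be cyclic, so each isotypic piece is centralized by a nontrivial element of $E$, and regrouping Galois-conjugate characters to descend to $\mathbb{F}_\ell$ yields $L \subseteq R_0$. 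In the non-abelian case, $E$ permutes the simple factors of $L$, and coprime-action fixed-point theorems (Glauberman's lemma and related tools) combine with the orbit analysis to give $L \subseteq R_0$. Finally, the coprime-action centralizer correspondence $C_{R/L}(h) = C_R(h)L/L$, valid since $\gcd(|E|,|L|) = 1$, lifts the inductive hypothesis $R/L = \langle C_{R/L}(h) \mid 1 \ne h \in E\rangle$ back to $R = R_0 L = R_0$.

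The main obstacle I anticipate is the non-abelian characteristically simple case, which requires tracking the permutation action of $E$ on the simple factors of $L$ and invoking Glauberman-type fixed-point results. The rank-$2$ hypothesis on $E$ (equivalent to the exclusion of cyclic and generalized quaternion $Q$ in the statement) is essential: the $p+1$ distinct subgroups of order $p$ in $E$ provide enough centralizers to collectively cover $L$, a covering that would fail for a cyclic $E$ of order $p$.
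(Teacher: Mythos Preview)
Your reduction to an elementary abelian subgroup $E$ of order $p^2$ matches the paper exactly. After that point the paper takes a shorter path that entirely sidesteps the non-abelian characteristically simple case you flagged as the main obstacle. Rather than induct on $|R|$ via a minimal $E$-invariant normal subgroup, the paper first reduces to $R$ being an $r$-group for a single prime $r$: since $E$ acts coprimely on $R$, it normalizes some Sylow $r$-subgroup $L_r$ for each prime $r \mid |R|$, and once each $L_r$ is shown to lie in $\langle C_R(h) : 1 \ne h \in E\rangle$, that subgroup contains a full Sylow $r$-subgroup for every $r$ and hence equals $R$. For an $r$-group $R$, the paper passes to $R/\Phi(R)$ via the coprime centralizer formula $C_{R/\Phi(R)}(h) = C_R(h)\Phi(R)/\Phi(R)$ together with the Frattini property, then to an irreducible $\mathbb{F}_r[E]$-module by Maschke, and finishes with Schur's lemma: the endomorphism ring is a field, so the abelian group $E$ maps into its multiplicative group with cyclic image and hence nontrivial kernel, giving $R = C_R(h)$ for some $1 \ne h \in E$.

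Your inductive scheme is sound in outline, but the non-abelian case you left as a sketch is genuinely not filled in, and in practice the cleanest way to handle it would be to apply exactly the Sylow-then-Frattini reduction above inside $L$ --- at which point the chief-factor framework becomes an unnecessary extra layer. The paper's argument is both shorter and more elementary (no Glauberman-type results, no analysis of permutation actions on simple factors); what your route would buy is a structure better adapted to tracking a full $E$-invariant normal series of $R$, but that is not needed for the bare generation statement here.
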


\begin{proof}   We can replace $Q$ by any subgroup still satisfying
the hypotheses and so assume that $Q$ is elementary abelian of order $p^2$.  First consider
the case that $R$ is an $r$-group for some prime
$r$ (necessarily $r \ne p$).    Let $\Phi(R)$ denote the Frattini
subgroup of $R$
and set ${\bar R}=R/\Phi(R)$.   Since $C_{\bar R}(Q_0) =
C_R(Q_0)\Phi(R)/\Phi(R)$ for any subgroup $Q_0$ of $Q$, it
suffices to assume that $\Phi(R)=1$, i.e., $R$ is an elementary abelian
$r$-group.
By Maschke's theorem, it suffices to assume that $R$ is irreducible.
By Schur's Lemma,
$Q$ cannot act faithfully on $R$ and so $R=C_R(h)$ for some nontrivial
$h \in Q$.

Now consider the general case.  Let $r$ be a prime divisor of $|R|$.
By the Sylow Theorem,
$Q$ normalizes some Sylow $r$-subgroup $L$ of $R$.  Applying the first
paragraph shows
that $L = \langle C_L(h) | 1 \ne h \in Q \rangle$.   Thus, $\langle
C_R(h) | 1 \ne h \in Q \rangle$
contains a Sylow $r$-subgroup of $R$ for each prime divisor $r$ of
$|R|$, whence the result.
\end{proof}

We now identify certain groups. 
Recall that a subgroup $H$ of a finite group $G$ is called a TI subgroup
if for $y \in G \setminus{N_G(H)}$,  $H \cap H^y=1$.   

\begin{theorem} \label{TI}  Let  $Q$ be a Sylow $p$-subgroup of a finite
group $A$ and suppose that $Q$ is not cyclic or generalized quaternion (if $p=2$).  
Assume that $I=N_A(Q) = QC$ with $C$ cyclic.
Assume also that $Q$ is a TI subgroup of $A$ and $Q \ne A$.   
Set $M=Z(I)$.  Then $M = O_{p'}(A)$, and $A/M$ is almost simple
and the socle of $A/M$ is isomorphic to one of:
\begin{enumerate}
\item   ${\rm PSL}_2(p^a), a \ge 2$;
\item   ${\rm PSU}_3(p^a), p^a > 2$;
\item   ${\rm Sz}(2^{2a +1}),  p=2, a > 1$; or
\item   ${^2}G_2(3^{2a+1})', p = 3$.
\end{enumerate}
In particular,  $A$ acts $2$-transitively on the set of Sylow $p$-subgroups of $A$
\end{theorem}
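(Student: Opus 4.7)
The plan is to reduce the theorem to the classification of finite simple groups with a strongly $p$-embedded subgroup. The first step is to observe that $I = N_A(Q)$ is itself strongly $p$-embedded in $A$: for any $g \in A \setminus I$, the TI hypothesis gives $Q \cap Q^g = 1$, and since $Q$ is the unique (normal) Sylow $p$-subgroup of $I$, the intersection $I \cap I^g$ has trivial Sylow $p$-subgroup, hence is a $p'$-group.

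I would next verify the easy inclusion $O_{p'}(A) \subseteq M$. Setting $R := O_{p'}(A)$, both $R$ and $Q$ are normal in $I$ of coprime orders, so $[R, Q] = 1$. Then $R$ embeds into any Hall $p'$-subgroup $L$ of $I$, which is cyclic (as a quotient of $C$) and therefore abelian, so $L$ centralizes $R$ as well. Thus $R$ is centralized by all of $I = QL$, giving $R \subseteq Z(I) = M$.

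Pass to $\bar A := A/R$, in which $O_{p'}(\bar A) = 1$ and all hypotheses are inherited: strong $p$-embedding, TI Sylow, and $N_{\bar A}(\bar Q) = \bar Q \bar C$ with $\bar C$ cyclic. Strong $p$-embedding combined with $O_{p'}(\bar A) = 1$ forces $F^*(\bar A)$ to be a single non-abelian simple group $S$, so $\bar A$ is almost simple with socle $S$. The main step is then to invoke the classification of finite simple groups admitting a strongly $p$-embedded subgroup (Bender--Suzuki for $p = 2$, CFSG-based for odd $p$): the possibilities for $S$ are the rank-one Lie type groups in characteristic $p$ together with a finite list of exceptions such as $A_{2p}$ and various sporadic groups at small primes. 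The principal technical obstacle is ruling out these exceptional cases: in each one, the Sylow $p$-subgroup is either not TI in $S$ or fails to have a cyclic normalizer complement, while the four families $\mathrm{PSL}_2(p^a)$, $\mathrm{PSU}_3(p^a)$, $\mathrm{Sz}(2^{2a+1})$, ${}^2G_2(3^{2a+1})'$ are precisely the finite simple groups of split $BN$-pair rank one, in which $N(Q) = Q \rtimes T$ with $T$ cyclic by construction and $Q$ is TI.

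Once the classification pins down these four families, a case-by-case inspection confirms $Z(\bar I) = 1$, which yields $Z(I) \subseteq R$ and thus the equality $M = R = O_{p'}(A)$. The $2$-transitivity of $A$ on Sylow $p$-subgroups follows from the natural $2$-transitive action of each rank-one $BN$-pair group on its set of Borel subgroups, which lifts to $A$ since $M$ acts trivially on the Sylow $p$-subgroups.
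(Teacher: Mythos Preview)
Your overall plan matches the paper's: reduce modulo $O_{p'}(A)$, recognize a strongly $p$-embedded subgroup, and invoke the GLS classification (their 7.6.1 and 7.6.2) to pin down the rank-one Lie type socles. The final steps---checking $Z(\bar I)=1$ in each case to get $M=O_{p'}(A)$, and reading off $2$-transitivity from the $BN$-pair structure---are also the same.

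However, there is a genuine gap in what you call ``the easy inclusion $O_{p'}(A)\subseteq M$.'' You assert that $R=O_{p'}(A)$ and $Q$ are both normal in $I$, but $R$ being normal \emph{in} $I$ presupposes $R\subseteq I$, which you have not established. Normality of $R$ in $A$ only gives that $I$ normalizes $R$, not that $R$ normalizes $Q$; without the latter you cannot conclude $[R,Q]\subseteq Q\cap R=1$, nor can you embed $R$ in a Hall $p'$-subgroup of $I$. This step is not automatic: if $Q$ were cyclic, Frobenius groups $R\rtimes Q$ with fixed-point-free action would give counterexamples where $Q$ is TI and strongly $p$-embedded yet $R\not\subseteq I$. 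The paper handles exactly this point with its Lemma~\ref{Lbob} (the coprime generation lemma): since $Q$ contains an elementary abelian subgroup of order $p^2$, one has $R=\langle C_R(h)\mid 1\ne h\in Q\rangle$, and the TI hypothesis forces each $C_A(h)\subseteq I$ (because $h\in Q\cap Q^g$ for any $g$ centralizing $h$), whence $R\subseteq I$. This is precisely where the hypothesis that $Q$ is not cyclic or generalized quaternion enters, and your argument does not use it at this stage. Once $R\subseteq I$ is in hand, the rest of your reasoning for $R\subseteq M$ goes through.
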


\begin{remark}
Theorem \ref{TI} partially extends the main result of \cite{KOS} 
(where the permutation action of $A$ on the set of Sylow $p$-subgroups 
was assumed to be doubly transitive) and it follows from \cite{KOS} under that permutation condition.
See \cite{asch}  which generalized the result of \cite{KOS} in a different direction.  
Note that the proof of Theorem \ref{TI} uses the classification of finite simple groups
whereas the results in \cite{asch} and \cite{KOS} do not. 
\end{remark}

\begin{proof}   Since $Q$ is a TI-subgroup, it follows that $O_p(A)=1$.
Indeed, this implies that $C_A(h) \subset I$ for any $1 \ne h \in Q$ (since
$Q$ is the unique Sylow $p$-subgroup containing $h$).

Now apply Lemma \ref{Lbob} to conclude that $R:=O_{p'}(A) \subset O_{p'}(I)$.  
Since $R$ normalizes $Q$ and $Q$ normalizes $R$, it follows that 
$[Q,R] \subset Q \cap R=1$, whence $R \subset M$.

Set $J=A/R$.  Then $O_{p'}(J)=1$.   We identify $Q$ with its image in 
$J$.  If $Q_1$ is any nontrivial subgroup of $Q$, then $N_J(Q_1) \subset N_A(Q_1)R/R 
\subset R$.  Thus, we can apply \cite[7.6.1]{GLS} to conclude
that  $J$ is almost simple and given as in that theorem.
Now use the fact that $I= \langle N_J(Q_1), 1 \ne  Q_1 \subset Q \rangle$
(and  $I/Q$ is cyclic) to conclude from
 \cite[7.6.2]{GLS} that only 7.6.1(a) can hold.
 
Finally, note that $Z(I/R)=1$ in each case, whence $R=M$. 
This completes the proof.
\end{proof} 

We apply this in the situation of a group acting on a curve with the following setup:
  
\begin{notation} \label{Nsetup}
Let $k$ be a field of characteristic $p$.
Let $\mathcal{X}$ be a curve of genus at least $2$ and set $A=\textrm{Aut}_k(\mathcal{X})$.
Let $Q$ be a $p$-subgroup of $A$ and let $I=N_A(Q)$.  We assume the following conditions:

\begin{description}
\item{(i)}  $Q$ is a Sylow $p$-subgroup of $A$;
\item{(ii)}  $Q$ is not cyclic or generalized quaternion (if $p=2$) --
equivalently,  $Q$ contains an elementary abelian subgroup of order $p^2$.
\item{(iii)}   $Q$ fixes precisely $1$ point $x \in \mathcal{X}$ and acts semiregularly
on $\mathcal{X} - \{x\}$ (i.e. every other orbit has size $|Q|$);
\end{description}
\end{notation}

In the situation of Notation \ref{Nsetup}, since $Q$ has a unique fixed point $x$, 
then $I$ also fixes $x$ and thus $I$ is the inertia group of $x$.
Note that condition (iii) is equivalent to the condition that every element of order $p$
has a unique fixed point.
This condition implies that, 
for any $H \subset {\rm Aut}(\mathcal{X})$ containing a Sylow $p$-subgroup,
the cover $\mathcal{X} \to \mathcal{X}/H$ has one wildly ramified branch point, with the inertia group containing a Sylow $p$-subgroup,
and all other branch points are tamely ramified.

We obtain:

\begin{theorem} \label{Tbob}
With notation as in \ref{Nsetup}, suppose that $A \ne I$.  Then the center $M$ of $I$ is a 
(possibly trivial) $p'$-subgroup normal in $A$ and $A/M$ is almost simple with socle one of:
\begin{enumerate}
\item   ${\textrm PSL}_2(p^a), a \ge 2$;
\item   ${\textrm PSU}_3(p^a), p^a > 2$;
\item   ${\textrm Sz}(2^{2a +1}),  p=2, a > 1$; or
\item   ${^2}G_2(3^{2a+1})', p = 3$.
\end{enumerate}
\end{theorem}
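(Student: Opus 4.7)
The plan is to deduce Theorem \ref{Tbob} from the purely group-theoretic Theorem \ref{TI} by verifying its two substantive hypotheses for $A = \textrm{Aut}_k(\mathcal{X})$: that $I = N_A(Q) = Q \cdot C$ with $C$ cyclic, and that $Q$ is a TI-subgroup of $A$. The conditions that $Q$ is not cyclic nor generalized quaternion and that $A \neq I$ are already in the hypotheses of Notation \ref{Nsetup} and Theorem \ref{Tbob}, so these are the only nontrivial things to check. Both will flow from the behavior of $A$ at the unique fixed point $x$ of $Q$.

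First I would identify $I$ with the inertia group $A_x$ of $x$ in $A$. The inclusion $I \subset A_x$ is immediate, since any element of $N_A(Q)$ permutes the singleton fixed-point set $\{x\}$ of $Q$. For the reverse inclusion, Lemma \ref{Lramfact}(iv) applied at $x$ shows that the wild inertia $(A_x)_1$ is a normal $p$-subgroup of $A_x$ with cyclic prime-to-$p$ quotient; hence it is the \emph{unique} Sylow $p$-subgroup of $A_x$. Since $Q \subset A_x$ is already a Sylow $p$-subgroup of $A$, it must coincide with $(A_x)_1$, so $A_x$ normalizes $Q$ and $A_x = I$. This same decomposition exhibits $I = Q \rtimes C$ with $C$ cyclic of order prime to $p$, as required.

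Next I would verify the TI condition. Suppose $\sigma \in Q \cap Q^y$ is non-trivial, where $Q^y = y^{-1}Qy$. Since $Q$ fixes $x$ and $Q^y$ fixes $y^{-1}(x)$, the element $\sigma$ fixes both $x$ and $y^{-1}(x)$; but condition (iii) of Notation \ref{Nsetup} forces any non-trivial element of $Q$ to fix only the single point $x$, whence $y^{-1}(x) = x$. Then $y \in A_x = I$ normalizes the unique Sylow $p$-subgroup $Q$ of $A_x$, so $y \in N_A(Q)$. With both hypotheses of Theorem \ref{TI} in hand, its conclusion gives exactly Theorem \ref{Tbob}. The only step requiring any real thought is the uniqueness of the Sylow $p$-subgroup in $A_x$, which simultaneously pins down $I = A_x$ and powers the TI check; once that observation is made, the rest is formal, and I expect Theorem \ref{TI} itself (whose proof uses the classification of finite simple groups) to absorb all the substantive difficulty.
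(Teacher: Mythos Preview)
Your proof is correct and follows essentially the same route as the paper's: verify that $Q$ is a TI subgroup of $A$ and invoke Theorem~\ref{TI}. The paper's own proof is terser---it dispatches the TI condition in one line (if $a\notin I$ then $Q^a$ fixes $a(x)\neq x$, so $Q\cap Q^a=1$) and relies on the remark preceding the theorem that $I$ equals the inertia group at $x$, leaving the decomposition $I=Q\rtimes C$ with $C$ cyclic implicit in the standard structure of inertia groups. Your version spells out both of these steps explicitly via Lemma~\ref{Lramfact}(iv), which is fine and arguably clearer, but there is no genuine difference in strategy.
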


\begin{proof}
Note that if $a \in A \setminus{I}$, then $Q^a$ fixes $a(x) \ne x$, whence
$Q \cap Q^a = 1$.  Thus, $Q$ is a TI subgroup of $A$.  The result then follows from Theorem \ref{TI}.
\end{proof}

\subsection{The case $q=2$} \label{Sq=2}

When $q=2$, condition (ii) of Notation \ref{Nsetup} is not satisfied, and so different methods are needed.  
Recall that $\Gamma$ is the inertia group of ${\mathcal C}_n$ at $P_\infty$ and that $M$ is the center of $\Gamma$.
Let $A=\mathrm{Aut}(\mathcal{C}_n)$ denote the automorphism group of $\mathcal{C}_n$.

\begin{proposition} \label{Pq=2}
Suppose $q=2$ and $n>3$ is odd.  Then $M$ is normal in $A = \mathrm{Aut}(\mathcal{C}_n)$
and either $A=\Gamma$ or $[A:\Gamma]=3$.
\end{proposition}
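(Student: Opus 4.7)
Since $q=2$, the Sylow $2$-subgroup $Q$ has order $8$, exponent $4$, and a unique involution $z_0 = Q_{1,0}$ (as $Q_{a,b}^2 = Q_{b^3,0}$ is nontrivial for $b \neq 0$), so $Q \cong Q_8$ is generalized quaternion. Theorem~\ref{Tbob} does not apply directly; my plan is to substitute the Brauer-Suzuki theorem for the TI-classification used there. By the same argument as in the proof of Theorem~\ref{Tbob}, distinct Sylow $2$-subgroups have distinct unique fixed points on $\mathcal{C}_n$, so $Q$ is still TI in $A$ and $N_A(Q) = \Gamma$.

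Next, let $O(A)$ denote the maximal normal odd-order subgroup of $A$. Any normal odd-order subgroup of $\Gamma = Q\rtimes\Sigma$ lies in $\Sigma$ (all odd-order elements of $\Gamma$ lie in $\Sigma$) and must be pointwise fixed by the conjugation action of $Q$, hence is contained in $M$. Thus $O(A) \subseteq M$. By Brauer-Suzuki applied to $A$, the image of $z_0$ in $\bar{A} := A/O(A)$ is central, so $\tilde{Z} := \langle z_0\rangle \cdot O(A)$ is normal in $A$.

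The main obstacle is to promote $O(A) \subseteq M$ to equality, i.e., to show $M \triangleleft A$. I would analyze the quotient $\bar{A}/\langle\bar{z}_0\rangle$: it has Klein four Sylow $2$-subgroup (the image of $Q/Z$) and trivial maximal normal odd-order subgroup. By the Gorenstein-Walter classification, this quotient is one of $1$, $(\mathbb{Z}/2)^2$, $A_5$, $A_7$, or $\mathrm{PSL}_2(\ell)$ for an odd prime power $\ell\equiv \pm 3 \pmod 8$. The trivial and Klein four cases would force $|\bar{A}|\leq 8$, incompatible with $\bar{\Gamma}\hookrightarrow\bar{A}$ unless $|O(A)|\geq 2^n+1 > |M|$, a contradiction. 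For the simple cases, the image $\bar{M}$ of $M$ in $\bar{A}$ is a cyclic odd-order subgroup central in $\bar{\Gamma}$, and using that $|M| = (2^n+1)/3$ is coprime to $5$ and $7$ for $n$ odd (since $2^n\not\equiv -1 \pmod 5, 7$ for odd $n$), a direct order comparison forces $\bar{M} = 1$. Therefore $M = O(A) \triangleleft A$.

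Finally, with $M$ normal, the quotient $A/M$ acts faithfully on $\mathcal{C}_n/M = \mathcal{H}_2$, which is a supersingular elliptic curve with $\mathrm{Aut}(\mathcal{H}_2,\infty) \cong SL_2(\mathbb{F}_3)$ of order $24$. Since $\Gamma/M \cong Q_8 \rtimes \mathbb{Z}/3$ has order $24$ and fixes $\infty$, we have $\Gamma/M = \mathrm{Aut}(\mathcal{H}_2,\infty)$. Any finite subgroup of $\mathrm{Aut}(\mathcal{H}_2)$ containing $\mathrm{Aut}(\mathcal{H}_2,\infty)$ splits as $T \rtimes SL_2(\mathbb{F}_3)$ for a finite $SL_2(\mathbb{F}_3)$-invariant subgroup of translations $T \subseteq \mathcal{H}_2(\overline{\mathbb{F}}_2)$. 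Proposition~\ref{Pliftw} shows the involution $\omega = T_{(0,0)}\cdot(-1)$ does not lift, excluding the $3$-torsion point $(0,0)$ from $T$; combined with the structure of $SL_2(\mathbb{F}_3)$-invariant subgroups of $\mathcal{H}_2(\overline{\mathbb{F}}_2)$, this restricts $|T|\in\{1,3\}$, yielding $[A:\Gamma]\in\{1,3\}$.
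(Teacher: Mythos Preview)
There is a genuine gap in the second paragraph. You prove that every normal odd-order subgroup of $\Gamma$ lies in $M$, and then write ``Thus $O(A)\subseteq M$.'' But $O(A)$ is normal in $A$, not in $\Gamma$, and there is no reason for it to be contained in $\Gamma$ at all. In fact, the very possibility you are trying to rule out, namely $A/M\cong \mathrm{PGU}_3(2)\cong 3^2.\mathrm{SL}_2(3)$, has $O(A)$ equal to the preimage in $A$ of the normal $3^2$, so that $|O(A)|=9m>|M|$ and $O(A)\not\subseteq M$. This is exactly why Lemma~\ref{Lbob} carries the hypothesis that $Q$ is not generalized quaternion: $Q_8$ can act fixed-point-freely on an odd-order group, so one cannot force $O_{p'}(A)$ into $C_A(Q)$. (There is also a smaller slip in your justification: it is not true that ``all odd-order elements of $\Gamma$ lie in $\Sigma$''; already in $\Gamma/M\cong \mathrm{SL}_2(3)$ there are eight elements of order~$3$ but only two lie in $\Sigma/M$. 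The conclusion that normal odd subgroups of $\Gamma$ lie in $M$ is nevertheless true, via the Hall-subgroup argument, but your reason is wrong.) Since both your Gorenstein--Walter analysis and the final paragraph presuppose $O(A)\subseteq M$, neither can proceed.

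The paper takes a different route that avoids this obstacle. After using the $Z^*$ theorem to obtain $A=O(A)\Gamma$ and solvability, it bounds $[A:\Gamma]$ using the Hurwitz bound applied to an odd-order index-$8$ subgroup. A short module argument, together with a Zsygmondy prime $s\ge 11$ dividing $m$, then shows that no prime $r>3$ can divide $|O(A)|$, so $O(A)$ is a small $3$-group. The key geometric input is that an element $h\in M$ of order $s$ has precisely the nine points of $\mathcal{C}_n(\mathbb{F}_4)$ as its fixed set, and the subgroup of $A$ fixing those nine points pointwise is exactly $M$. This yields a faithful action of $A/M$ on nine points, from which the normality of $M$ and the index dichotomy follow directly.
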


\begin{proof}
By Proposition \ref{Csyl}, a Sylow $2$-subgroup $Q$ of $A$ is a quaternion group of order $8$.
Proposition \ref{orbits} and Proposition \ref{TautC} imply that 
every nontrivial element of $Q$ has a unique fixed point $x$ on the curve.

Let $z$ be the central involution in $Q$.  By Glauberman's $Z^*$ Theorem
\cite{GG}, $z$ is central in $A/O(A)$ where $O(A) = O_{2'}(A)$.
By the Sylow Theorem, this implies that $A = O(A)C_A(z)$.  Since $z$ has a unique
fixed point,  $C_A(z)=I$.    In particular, this implies that $A$ is solvable.
Thus $A$ contains an odd order subgroup $H$ of index $8$ in $A$.

By the Hurwitz bound, $|H|  \le 84(g_{\mathcal{C}_n}-1)$.  
When $q=2$, then $g_{\mathcal{C}_n}=2^n+2^{n-1}-2$ and $|\Gamma \cap H|=2^n+1$.
This implies that $[H:\Gamma \cap H] < 126$.  

Suppose that a prime $r > 3$ divides $|O(A)|$.  
If so, then by the Sylow Theorem, $\Gamma$ would normalize 
a Sylow $r$-subgroup $S$ of $O(A)$.   Since $z$ does not centralize
any element of order $r$, it follows that $S$ is abelian; (it is inverted by $z$).   
Let $V \subset S$ be a minimal normal $\Gamma$-invariant $r$-subgroup.   
Then $V$ must be an irreducible $\Gamma$-module.
Let $E = \mathrm{End}_{Q}(V)$.   
So $V$ is absolutely irreducible as an $EQ$-module with $E$ a field.  
Thus,  $|V|=|E|^2  < 126$ and so $|E| \le 11$.

Continuing with the assumption that a prime $r > 3$ divides $|O(A)|$,  
let $s \ge 11$ be a prime dividing $m$ (which always exists by Zygmundy's theorem and the fact that $n \ge 5$).
Let $h \in M$ be an element of order $s$; the $9$ points in ${\mathcal C}_n({\mathbb F}_4)$ are the fixed points of $h$.
Thus, $h$ must centralize $V$, whence $V$ acts trivially on the $9$ fixed points of $h$. 
Thus $V \subset M$, a contradiction.

Thus $O(A)$ is a $3$-group of order $3^a$ with $a \le 4$.
All $Q$-composition factors on $O(A)$ are nontrivial (since $z$ centralizes
no elements of $O(A)$).  It follows that $a$ is even, so $a=2$ or $4$.
Arguing as above, the element $h$ of order $s$ in $\Gamma$ must
centralize $O(A)$.  Thus, $O(A)$ acts on the $9$ fixed points of $h$.
The subgroup of $A$ fixing these $9$ points is precisely $M$, whence
$A/M$ acts faithfully on these $9$ points.  If it is not transitive,
then $A=\Gamma$.  If it is transitive, then it is $2$-transitive with
point stabilizer $\Gamma/M$, whence $A/M \cong 3^2.SL_2(3) \cong PGU_3(2)$.

Thus either $A=\Gamma$ or $[A:\Gamma]=3$.  In both cases, $M$ is normal in $A$.
\end{proof}

\begin{remark}
When $q=2$, the equation for ${\mathcal X}_n$ is the same as $y^{q+2}-y=z^m$.
In \cite[Thm.\ 3]{ABQ}, the authors show that ${\mathcal X}_n$ is a quotient of the Hermitian curve ${\mathcal H}_{q^n}$
by a cyclic group of order $2^n+1$.
We remark that \cite[Prop.\ 4.1(1)]{Duursma} is not set-up correctly, leading to a gap in the proof of \cite[Thm.\ 1.2]{Duursma};
and so it not yet clear whether ${\mathcal C}_n$ is a quotient of a Hermitian curve when $q=2$. 
\end{remark}

\subsection{The automorphism group of ${\mathcal C}_n$} \label{Smaintheorem}

Theorem \ref{Tmaintheorem} is equivalent to the following result.

\begin{theorem}
Suppose $n > 3$ is odd.
The automorphism group $\textrm{Aut}(\mathcal{C}_n)$ fixes the point at infinity on $\mathcal{C}_n$
and is isomorphic to $\Gamma=Q\rtimes_\phi \Sigma$.
\end{theorem}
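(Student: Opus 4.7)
The plan is to show $A := \textrm{Aut}(\mathcal{C}_n) = \Gamma$; since $\Gamma$ equals the inertia group $I_{\mathcal{C}_n}$ at $P_\infty$ by Proposition \ref{TautC}, this immediately yields both conclusions of the theorem. The containment $\Gamma \subseteq A$ is Proposition \ref{Pautsub}, so the task is to prove the reverse inclusion, which I do by contradiction, assuming $A \neq \Gamma$.

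The first step is to promote the normality of $M$ from $\Gamma$ to $A$. For $q > 2$, I verify the hypotheses of Notation \ref{Nsetup} for the action of $A$ on $\mathcal{C}_n$: condition (i) is Proposition \ref{Csyl}; condition (ii) holds because $Q$ contains an elementary abelian subgroup of order $p^2$ (for $p$ odd this follows from $Q$ having exponent $p$, and for $p = 2$ with $q > 2$ the center $Z \cong (\mathbb{Z}/2)^h$ has rank $h \ge 2$); condition (iii) is Proposition \ref{orbits}(ii). Theorem \ref{Tbob} then yields $M \vartriangleleft A$. For $q = 2$, Proposition \ref{Pq=2} provides $M \vartriangleleft A$ directly, together with the stronger information that either $A = \Gamma$ or $A/M \cong \textrm{PGU}_3(2)$.

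The second step transfers the problem into $\textrm{Aut}(\mathcal{H}_q) = \textrm{PGU}_3(q)$. Since $M$ is normal in $A$ and $\mathcal{C}_n/M = \mathcal{H}_q$ by Lemma \ref{Ginaut}, there is an induced homomorphism $A/M \to \textrm{PGU}_3(q)$; its kernel consists of automorphisms of $\mathcal{C}_n$ that act trivially on $\mathcal{H}_q$, hence lie in the Galois group of $\mathcal{C}_n \to \mathcal{H}_q$, which is $M$, so the map is injective. The image of $\Gamma/M$ lies in the stabilizer of $\infty_{\mathcal{H}_q}$, which is the Borel subgroup $B$ of $\textrm{PGU}_3(q)$, and a direct order count $|\Gamma/M| = q^3(q^2-1) = |B|$ forces the equality $\Gamma/M = B$. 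Because $\textrm{PGU}_3(q)$ acts $2$-transitively on the $q^3 + 1$ $\mathbb{F}_{q^2}$-rational points of $\mathcal{H}_q$ with point stabilizer $B$, the subgroup $B$ is maximal in $\textrm{PGU}_3(q)$. Since $A/M$ properly contains $B$ by assumption, it follows that $A/M = \textrm{PGU}_3(q)$; but then the involution $\omega : (x, y) \mapsto (1/x, y/x)$, which lies in $\textrm{PGU}_3(q)$, would lift to an element of $A$, contradicting Proposition \ref{Pliftw}.

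The main obstacle is the first step, namely establishing that $M$ is normal in the full automorphism group rather than only in its inertia subgroup $\Gamma$. This is precisely where the classification-of-finite-simple-groups-powered Theorem \ref{Tbob} is invoked for $q > 2$ and where the separate Glauberman-based argument of Proposition \ref{Pq=2} is needed for $q = 2$. Once $M \vartriangleleft A$ is in hand, the remainder of the argument is a clean subgroup comparison inside the well-understood overgroup $\textrm{PGU}_3(q)$, with the non-liftability of $\omega$ serving as the decisive obstruction.
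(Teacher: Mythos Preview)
Your proof is correct and follows essentially the same route as the paper's: verify the hypotheses of Notation~\ref{Nsetup}, invoke Theorem~\ref{Tbob} (or Proposition~\ref{Pq=2} when $q=2$) to obtain $M \vartriangleleft A$, embed $A/M$ in $\textrm{PGU}_3(q)$, use maximality of $\Gamma/M$ to force $A/M = \textrm{PGU}_3(q)$ if $A \ne \Gamma$, and then contradict Proposition~\ref{Pliftw}. The only differences are cosmetic: you frame the argument as a contradiction from the outset and you spell out more explicitly why $A/M \hookrightarrow \textrm{PGU}_3(q)$ is injective and why $\Gamma/M$ equals the full Borel, whereas the paper leaves these as one-line assertions.
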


\begin{proof}
Let $A=\textrm{Aut}(\mathcal{C}_n)$ and $I=\Gamma$.
When $q \not =2$, first note that the hypotheses of Notation \ref{Nsetup} are satisfied.
By Proposition \ref{TautC}, $\Gamma=Q\rtimes_\phi \Sigma \subset A$ is the inertia group of 
${\mathcal C}_n$ at $P_\infty$.  This implies that $\Gamma = N_A(Q)$.
By Proposition \ref{Csyl}, $Q$ is the Sylow $p$-subgroup of $A$ which gives condition (i).
Condition (ii) follows from the information about $Q$ in Section \ref{Q}.  
Condition (iii) is guaranteed from Proposition \ref{orbits}.

It follows that $M=Z(I)$ is normal in $A$ by Theorem \ref{Tbob} when $q \not =2$ and by Proposition \ref{Pq=2} when $q=2$.
Thus, $A/M$ embeds in $\textrm{Aut}(\mathcal{H}_q) \cong \textrm{PGU}_3(q)$.  
Since $I/M$ is a maximal subgroup of $\textrm{PGU}_3(q)$, it follows that either $A=I$ or $A/M \cong \textrm{PGU}_3(q)$.  
If the latter holds, it follows that every automorphism of $\mathcal{H}_q$ lifts to $\mathcal{C}_n$.
However, when $n >3$, this fails by Proposition \ref{Pliftw}.
Thus, $A=I$ as required.   This completes the proof.  
\end{proof}


\bibliographystyle{plain}

\bibliography{MPbib}

\vspace{.1in}
Robert Guralnick \\
Department of Mathematics, University of Southern California \\
Los Angeles, CA 90089-2532, USA

\vspace{.2in}
Beth Malmskog \\
Department of Mathematics and Computer Science, Wesleyan University \\
Middletown, CT 06459-0128, USA

\vspace{.2in}
Rachel Pries \\
Department of Mathematics, Colorado State University \\
Fort Collins, CO 80523-1874, USA 
\end{document}